\newcommand{\pt}{\partial}
\newcommand{\vp}{\varphi}
  \newcommand{\M}{{\mathcal M}}
\newcommand{\br}{\mathbb{R}}
\newcommand{\bz}{\mathbb{Z}}
\newcommand{\bn}{\mathbb{N}}
\newcommand{\bt}{\mathbb{T}}
\renewcommand{\L}{\mathbb{L}}
\renewcommand{\a}{\alpha}
\newcommand{\e}{\varepsilon}
\renewcommand{\(}{\left(}
\renewcommand{\)}{\right)}
\newcommand{\na}{\nabla}
\newcommand{\curl}{\operatorname{curl}}
\renewcommand{\div}{\operatorname{div}}
\newtheorem{thm}{Theorem}
\newtheorem{lem}[thm]{Lemma}
\newtheorem{prop}[thm]{Proposition}
\newtheorem{defi}[thm]{Definition}
\newtheorem{remark}[thm]{Remark}
\def\be{\begin{equation}}
\def\ee{\end{equation}}
\def\bea{\begin{eqnarray}}
\def\eea{\end{eqnarray}}
\numberwithin{thm}{section}
\numberwithin{equation}{section}
\author{Daniel Han-Kwan\footnote{CNRS $\&$ \'Ecole Polytechnique, Centre de Math\'ematiques Laurent Schwartz, UMR 7640. Email: daniel.han-kwan@math.polytechnique.fr}, \, Mikaela Iacobelli\footnote{Universit\`a ``La Sapienza'', Dipartimento di Matematica ``Guido Castelnuovo'', Roma.
Email: iacobelli@mat.uniroma1.it} \footnote{
\'Ecole Polytechnique, Centre de Math\'ematiques Laurent Schwartz, UMR 7640. 
}
}
\title{Quasineutral limit for Vlasov-Poisson via Wasserstein stability estimates in higher dimension}
\begin{document}

\maketitle

\begin{abstract}
This work is concerned with the quasineutral limit of the Vlasov-Poisson system in two and three dimensions.
We justify the formal limit for very small but rough perturbations of analytic initial data, generalizing the results of \cite{HI} to higher dimension.
\end{abstract}

\section{Introduction}
In a non relativistic setting the dynamics of electrons in a plasma with heavy ions uniformly distributed in space is described by the Vlasov-Poisson system. Throughout this paper, we will focus on the $2$ and $3$ dimensional periodic (in space) case.
We introduce the distribution function of the electrons $f(t, x, v)$, for $t \in \br^+$, $(x,v)\in \bt^d\times \br^d$ where $ \bt^d$ is the $d$-dimensional torus and $d=2,3.$ As usual, $f(t, x, v)dxdv$ can be interpreted as the probability of finding particles with position and velocity close to the point $(x,v)$ in the phase space at time $t$. We also define the electric potential $U(t, x)$ and the associated electric field $E(t,x)$. 

We introduce the positive parameter $\e$ defined as the ratio of the \emph{Debye length} of the plasma to the size of the domain. The Debye length can be interpreted as the typical length below which charge separation occurs; it plays an important role in plasma physics. For a more detailed discussion on this subject we refer to the introduction of \cite{HK}. 
Adding a subscript in order to emphasize on the dependance on $\e$, we end up with the rescaled Vlasov-Poisson system:
\be
\label{vp}
 \left\{ \begin{array}{ccc}\pt_t f_\e+v\cdot \na_x f_\e+ E_\e\cdot \na_v f_\e=0,  \\
E_\e=-\na_x U_\e, \\
-\e^2 \Delta_x U_\e=\int_{\br^d} f_\e\, dv - \int_{\bt^d \times \br^d}  f_\e\, dv \, dx ,\\
f_\e\vert_{t=0}=f_{0,\e}\ge0,\ \  \int_{\bt^d \times \br^d} f_{0,\e}\,dx\,dv=1,
\end{array} \right.
\ee
and the energy of this system is
\be
\label{energy}
\mathcal{E}(f_\e(t)) := \frac{1}{2} \int_{\bt^d \times \br^d} f_\e |v|^2 \, dv dx + \frac{\e^2}{2} \int_{\bt^d} |\nabla_x U_\e|^2 \, dx.
\ee

In this paper we study the behavior of solutions to the system \eqref{vp} as $\e$ goes to $0$. We will refer to this limit as the \emph{quasineutral limit}. Let us notice that the Debye length is almost always very small compared to the typical observation length, so the quasineutral limit is relevant from the physical point of view and widely used in plasma physics.

Let us observe that, if $f_\e \to f$ and $U_\e \to U$ in some sense as $\e \to 0$, the formal limit of our system is 
\be
\label{formal}
 \left\{ \begin{array}{ccc}\pt_t f +v\cdot \na_x f+ E\cdot \na_v f=0,  \\
E= -\nabla_x U, \\
\int_{\br^d} f\, dv = 1,\\
f\vert_{t=0}=f_{0}\ge0,\ \  \int_{\bt^d \times \br^d} f_0\,dx\,dv=1,
\end{array} \right.
\ee
and the total energy of the system reduces to the kinetic part of \eqref{energy}
$$
\mathcal{E}(f(t)) := \frac{1}{2} \int_{\bt^d \times \br^d} f |v|^2 \, dv dx.
$$
In this system, the force $E$ is a Lagrange multiplier, or a pressure, associated to the constraint $\int_{\br^d} f\, dv = 1$.

The justification of the quasineutral limit from the rescaled Vlasov-Poisson system \eqref{vp} to \eqref{formal} is subtle and has a long history. Up to now, this limit is known to be true only in few cases and we refer to \cite{Br89,Gr95, Gr96, Br00,Mas, HKH, HI} for a deeper understanding of this problem.

One of the first mathematical works on the quasineutral limit of the Vlasov-Poisson system was performed by Grenier  in \cite{Gr96}. He introduces an interpretation of the plasma as a superposition of a -possibly uncountable- collection of fluids and he shows that the quasineutral limit holds when the sequence of initial data $f_{0,\e}$ enjoys uniform analytic regularity with respect to the space variable. This convergence result has been improved by Brenier \cite{Br00}, who gives a rigorous justification of  the quasineutral limit in the so called ``cold electron'' case, i.e. when the initial distribution $f_{0,\e}$ converges to a monokinetic profile 
$$
f_0(x,v) = \rho_0(x) \delta_{v= v_0(x)}
$$ 
where $\delta_v$ denotes the Dirac measure in velocity. For further insight on this direction see also \cite{Br00,Mas,GSR}. 

A different approach, more focused on the question of stability, or eventually instability, around homogeneous equilibria in the quasineutral limit is developed in \cite{HKH}. They show that the limit is true for homogeneous profiles that satisfy some monotonicity condition, together with a symmetry condition, i.e. when the initial distribution $f_{0,\e}$ converges to an homogeneous initial condition $\mu(v)$ which is symmetric with respect to some $\overline{v} \in \br$ and which is first increasing then decreasing. 

In our previous work \cite{HI}, we consider the quasineutral limit of the one-dimensional Vlasov-Poisson
equation for ions with massless thermalized electrons (considering that electrons move very fast and quasi-instantaneously reach their local thermodynamic equilibrium), and we prove that the limit holds for very small but rough perturbations of analytic data. In this context, small means small in the Wasserstein distance $W_1$, which implies that highly oscillatory perturbations are for instance allowed. Our aim is to show that an analogue of this result holds in higher dimension.
\medskip

Let us introduce the notions of $p$-Wasserstein distance and weak convergence in the $p$-Wasserstein space $\mathcal{P}_p(\mathcal{M})$ (see for instance \cite{Vil03}).
\begin{defi}[Wasserstein distance]
Let $(\mathcal{M}, d)$ be a Polish space and let us denote with $\mathcal{P}_p(\mathcal{M})$ the collection of all probability measures $\mu$ on $\mathcal{M}$ with finite $p$ moment: for some $x_0 \in \mathcal{M}$,
$$
\int_{\mathcal{M}} d(x, x_{0})^{p} \, \mathrm{d} \mu (x) < +\infty.
$$
Then the $p$-Wasserstein distance between two probability measures $\mu$ and $\nu$ in $\mathcal{P}_p(\mathcal{M})$ is defined as
$$
W_{p} (\mu, \nu):=\left( \inf_{\gamma \in \Gamma (\mu, \nu)} \int_{\mathcal{M} \times \mathcal{M}} d(x, y)^{p} \, \mathrm{d} \gamma (x, y) \right)^{1/p},
$$
where $\Gamma (\mu, \nu)$ denotes the collection of all measures on $\mathcal{M}\times \mathcal{M}$ with marginals $\mu$ and $\nu$ on the first and second factors respectively. 
\end{defi}

\begin{defi}[Weak convergence in $\mathcal{P}_p(\mathcal{M})$] Let $(\mathcal{M}, d)$ be a Polish space, and $p\in [1, \infty).$ Let $(\mu_k)_{k\in \bn}$ be a sequence of probability measures in $\mathcal{P}_p(\mathcal{M})$ and let $\mu$ be another measure in $P(\mathcal{M})$. Then $\mu_k$ converges weakly in $\mathcal{P}_p(\mathcal{M})$ to $\mu$ if any one of the following equivalent properties is satisfied for some (and then any) $x_0 \in \mathcal{M}$:
\begin{enumerate}
\item $\mu_k\rightharpoonup\mu$ and $\int d(x, x_0)^p d\,\mu_k(x)\to\int d(x, x_0)^p d\,\mu(x);$
\item $\mu_k\rightharpoonup\mu$ and $\underset{k\to\infty}{\limsup}\int d(x, x_0)^p d\,\mu_k(x)\le\int d(x, x_0)^p d\,\mu(x);$
\item $\mu_k\rightharpoonup\mu$ and $\underset{R\to\infty}{\lim} \underset{k\to\infty}{\limsup}\int_{d(x,x_0)\ge R}d(x, x_0)^p d\,\mu_k(x)=0;$
\item For all continuous functions $\vp$ with $|\vp(x)|\le C(1+d(x,x_0)^p),$ $C\in \br,$ one has
$$
\int \vp(x)d\,\mu_k(x)\to\int \vp(x)d\,\mu(x).
$$
\end{enumerate}

\end{defi}
In our case the \emph{Wasserstein space} $\mathcal{P}_2(\mathcal{M})$ is the space of probability measures which have a finite moment of order $2$ and it will always be equipped with the quadratic Wasserstein distance $W_2$.
\begin{remark}
\begin{itemize}
\item By H\"{o}lder's inequality we have that
$$
p \le q \Rightarrow W_p \le W_q.
$$
In particular, the Wasserstein distance $W_1$, is the weakest of all and results in $W_2$ distance are usually stronger than results in $W_1$ distance.
\item Let $(\mathcal{M}, d)$ be a Polish space, and $p\in [1, \infty);$ then the Wasserstein distance $W_p$ metrizes the weak convergence in $\mathcal{P}_p(\mathcal{M})$. In other words, if $(\mu_k)_{k\in \bn}$ is a sequence of measures in $\mathcal{P}_p(\mathcal{M})$ and $\mu$ is another measure in $P(\mathcal{M})$, then $\mu_k$ converges weakly in $\mathcal{P}_p(\mathcal{M})$ to $\mu$ if and only if
$$
W_p(\mu_k, \mu) \to 0 \qquad as\ \  k\to \infty.
$$
\end{itemize} 
\end{remark}
In order to state our main result, let us introduce the fluid point of view and the convergence result for uniformly analytic initial data  introduced by Grenier in \cite{Gr96}.

We first recall the definition of spaces of analytic functions we use below.
\begin{defi}
\label{def:norm}
Given $\delta>0$ and a function $g:\bt \to \mathbb R$, we define
$$
\| g \|_{B_\delta} := \sum_{k \in \bz} | \widehat{g}(k) | \delta^{|k|},
$$
where $\widehat{g}(k)$ is the $k$-th Fourier coefficient of $g$. We define $B_\delta$ as the space of functions $g$ such that $\| g \|_{B_\delta}<+\infty$.
\end{defi}


We assume that, for all $\e \in (0,1)$, $g_{0,\e}(x,v)$ is a \emph{continuous} function; following Grenier \cite{Gr96}, we write each initial condition as a ``superposition of Dirac masses in velocity'':
$$
g_{0, \e}(x,v) = \int_\M \rho_{0,\e}^\theta(x) \delta_{v= v_{0,\e}^\theta(x)} \, d\mu(\theta)
$$
with $\M:= \br^d$, $d\mu(\theta) = c_d \frac{d\theta}{1+|\theta|^{d+1}}$, 
where $c_d$ is a normalizing constant (depending only on the dimension $d$),
$$\rho_{0, \e}^\theta= \frac{1}{c_d} (1+ |\theta|^{d+1}) g_{0,\e}(x,\theta), \quad  v_{0,\e}^\theta = \theta.$$
This leads to the study of the behavior as $\e\to0$ for solutions to the\emph{ multi-fluid pressureless Euler-Poisson system} 
\begin{equation}
\label{fluid}
 \left\{ \begin{array}{ccc}\pt_t \rho_\e^\theta+ \nabla_x  \cdot (\rho_\e^\theta v_\e^\theta)=0,  \\
\pt_t v_\e^\theta + v_\e^\theta \cdot \nabla_x v_\e^\theta = E_\e, \\
E_\e=- \nabla_x U_\e, \\
-\e^2 \Delta_x U_\e=\int_\M \rho_\e^\theta \, d\mu(\theta) -1,\\
\rho_\e^\theta\vert_{t=0}= \rho_{0,\e}^\theta, v_\e^\theta\vert_{t=0}= v_{0,\e}^\theta.
\end{array} \right.
\end{equation}
One then checks that defining
$$g_\e(t,x,v) = \int_\M \rho_\e^\theta(t,x) \delta_{v= v_\e^\theta(t,x)} \, d\mu(\theta)$$
provides a weak solution to \eqref{vp}.

The formal limit system, which is associated to the kinetic incompressible Euler system \eqref{formal}, is the following \emph{multi fluid incompressible Euler system}:
\begin{equation}
\label{limit-fluid}
\left\{ \begin{array}{ccc}\pt_t \rho^\theta+  \nabla_x  \cdot  (\rho^\theta v^\theta)=0,  \\
\pt_t v^\theta + v^\theta \cdot   \nabla_x v^\theta = E, \\
\operatorname{curl} E= 0, \, \int_{\bt^d} E \, dx =0, \\
\int_\M \rho^\theta \, d\mu(\theta) =1,\\
\rho^\theta\vert_{t=0}= \rho_{0}^\theta, v^\theta\vert_{t=0}= v_{0}^\theta,
\end{array} \right.
\end{equation}
where the $\rho_0^\theta$ are defined as the limits of $\rho_{0,\e}^\theta$ (which are thus supposed to exist)  and $v_0^\theta=\theta$. 

As before, one checks that defining
$$g(t,x,v) = \int_\M \rho^\theta(t,x) \delta_{v= v^\theta(t,x)} \, d\mu(\theta)$$
gives a weak solution to the kinetic Euler incompressible system \eqref{formal}.


We are now in position to state
the results of \cite[Theorems 1.1.2, 1.1.3 and Remark 1 p. 369]{Gr96}. 

\begin{prop}
\label{grenier}
Assume that there exist $\delta_0,C,\eta>0$, with $\eta$ small enough,  such that
$$
\sup_{\e\in (0,1)}\sup_{v \in \br} (1+v^2) \| g_{0,\e} (\cdot,v)\|_{B_{\delta_0}}  \leq C,
$$
and that 
$$
\sup_{\e\in (0,1)} \left\|  \int_\br g_{0,\e}(\cdot,v) \, dv  -1 \right\|_{B_{\delta_0}} < \eta.
$$
%
Denote for all $\theta \in \br$,
$$\rho_{0, \e}^\theta= \pi (1+ \theta^2) g_{0,\e}(x,\theta), \quad  v_{0,\e}^\theta = v^\theta= \theta.$$
Assume that for all $\theta \in \br$, $\rho_{0,\e}^\theta$ has a limit in the sense of distributions and denote
$$
\rho_0^\theta= \lim_{\e \to 0} \rho_{0,\e}^\theta.  
$$
Then there exist $\delta_1>0$ and $T>0$ such that:
\begin{itemize}
\item for all $\e \in (0,1)$, there is a unique solution $(\rho_\e^\theta, v_\e^\theta)_{\theta \in M}$ of \eqref{fluid} with initial data $(\rho_{0,\e}^\theta, v_{0,\e}^\theta)_{\theta \in M}$, such that $\rho_\e^\theta, v_\e^\theta \in C([0,T]; B_{\delta_1})$ for all $\theta \in M$ and $\e \in (0,1)$, with bounds that are uniform in $\e$;
\item there is a unique solution $(\rho^\theta, v^\theta)_{\theta \in M}$ of \eqref{limit-fluid} with initial data $(\rho_{0}^\theta, v_{0}^\theta)_{\theta \in M}$, such that $\rho^\theta, v^\theta \in C([0,T]; B_{\delta_1})$
 for all $\theta \in M$;
\item for all $s \in \mathbb{N}$, we have
\be
\label{eq:conv}
\sup_{\theta \in M} \sup_{t \in [0,T]} \left[ \| \rho_\e^\theta - \rho^\theta\|_{H^s (\bt)} +  \| v_\e^\theta-\frac{1}{i}(d_+(t,x)e^{\frac{it}{\sqrt \e}}-d_-(t,x)e^{-\frac{it}{\sqrt \e}}) - v^\theta\|_{H^s (\bt)} \right] \to_{\e \to 0 } 0
\ee
where $d_{\pm}(t,x)$ are the correctors introduced to avoid the so called ``plasma oscillations''. They are defined as the solution of
\begin{equation}
\label{eq:correctors}
\curl \ d_{\pm}=0, \qquad \div \bigg(\pt_t d_{\pm}+\left(\int \rho_\theta v_\theta \mu (d\theta) \cdot \na \right)d_{\pm} \bigg) =0,
\end{equation}
\begin{equation}
\label{eq:correctors_initial}
\div d_{\pm} (0)= \underset{\e \to 0}{\lim} \div \frac{\sqrt \e E^\e(0) \pm i j^\e(0)}{2},
\end{equation}
where $ j^\e:= \int \rho^\e_\theta v^\e_ \theta \mu (d\theta).$
\end{itemize}
\end{prop}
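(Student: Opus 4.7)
The plan is to follow Grenier's filtering approach, viewing \eqref{fluid} as a quasilinear symmetric hyperbolic system for $(\rho^\theta_\e, v^\theta_\e)$ coupled, through $E_\e$, to the singularly perturbed elliptic constraint $-\e^2 \Delta_x U_\e = \int_\M \rho^\theta_\e\, d\mu(\theta) - 1$. First, I would recast \eqref{fluid} by promoting $\e E_\e$ to a genuine evolution unknown: differentiating the Poisson equation in time and using the continuity equations $\partial_t \rho^\theta_\e + \nabla_x\cdot(\rho^\theta_\e v^\theta_\e) = 0$ yields an evolution equation for $E_\e$. After this symmetrization, the enlarged system for $(\rho^\theta_\e, v^\theta_\e, \e E_\e)$ has its singular part organized into a skew-symmetric first-order operator of size $1/\e$, whose non-trivial modes oscillate at the plasma frequency $1/\sqrt{\e}$ while the degenerate ones describe the incompressible limit.

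Second, I would establish uniform-in-$\e$ local existence in analytic regularity via an abstract Cauchy-Kowalewski argument of Nishida-Nirenberg type. On the shrinking analytic scale $B_{\delta(t)}$ with $\delta(t) := \delta_1 - \beta t$ for some $\delta_1 \in (0,\delta_0)$, the nonlinear transport and quadratic terms satisfy the algebra bound $\|fg\|_{B_\delta} \le \|f\|_{B_\delta}\|g\|_{B_\delta}$ and the loss-of-one-Fourier-weight estimate $\|\nabla_x f\|_{B_{\delta'}} \le (\delta-\delta')^{-1} \|f\|_{B_\delta}$. Crucially, the singular $1/\e$ block is skew in the symmetrized variables and produces no contribution to the $B_\delta$ energy balance. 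Choosing $\beta$ sufficiently large but independent of $\e$, a Picard iteration converges and yields solutions on a common interval $[0,T]$ with uniform analytic bounds. The limit system \eqref{limit-fluid} is handled by the same scheme at $\e = 0$, the constraint $\int_\M \rho^\theta\, d\mu(\theta) = 1$ being propagated along the flow.

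Third, to prove \eqref{eq:conv} I would introduce the filtered remainders
$$r_\e^\theta := \rho_\e^\theta - \rho^\theta, \qquad w_\e^\theta := v_\e^\theta - v^\theta - \frac{1}{i}\bigl(d_+ e^{it/\sqrt{\e}} - d_- e^{-it/\sqrt{\e}}\bigr),$$
and derive their evolution. Plugging the ansatz $v_\e^\theta = v^\theta + \frac{1}{i}(d_+ e^{it/\sqrt{\e}} - d_- e^{-it/\sqrt{\e}}) + w_\e^\theta$ into the enlarged system, the potentially resonant $O(1/\sqrt{\e})$ contributions cancel precisely when $d_\pm$ satisfy \eqref{eq:correctors} with initial data \eqref{eq:correctors_initial}, while the $O(1)$ non-oscillatory terms reproduce \eqref{limit-fluid}. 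What remains is a linear system for $(r_\e^\theta, w_\e^\theta)$ driven by terms that are either small in analytic norm or purely oscillatory at frequencies $\pm 1/\sqrt{\e}, \pm 2/\sqrt{\e}$; integration by parts in time transfers one $\sqrt{\e}$ factor from the phase to the amplitude, and the uniform analytic estimates of step two control the commutators. Standard $H^s$ energy estimates combined with Gronwall's inequality then yield $r_\e^\theta, w_\e^\theta \to 0$ in $H^s(\bt^d)$ for every $s \in \bn$.

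The main obstacle lies in step two: pinning down the exact symmetrization that makes the $1/\e$ contribution genuinely skew, so that the Cauchy-Kowalewski iteration closes uniformly in $\e$. This crucially exploits the smallness hypothesis $\|\int g_{0,\e}(\cdot,v)\,dv - 1\|_{B_{\delta_0}} < \eta$, which keeps the auxiliary variable $\e E_\e$ uniformly bounded in analytic norm at $t=0$ and ensures that the skew-symmetric leading order dominates throughout the analytic energy estimate without triggering $\e$-dependent growth.
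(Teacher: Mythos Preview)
The paper does not prove this proposition at all: it is stated explicitly as a quotation of Grenier's results, introduced by ``We are now in position to state the results of [Theorems 1.1.2, 1.1.3 and Remark 1 p.~369]{Gr96}'', and is used as a black box in the proof of the main theorem. So there is no ``paper's own proof'' to compare against.

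Your sketch is a reasonable high-level outline of what Grenier actually does in \cite{Gr96}: the multi-fluid reformulation, the promotion of the electric field to an evolution variable so that the singular-in-$\e$ part becomes a constant-coefficient skew block, the Cauchy--Kowalewski/Nishida existence argument in the shrinking analytic scale $B_{\delta(t)}$, and the filtering by the plasma-oscillation correctors $d_\pm$ to kill the resonant terms before closing by Gronwall. One point worth tightening if you pursue this: in Grenier's setup the singular operator acts on the pair $(\e E_\e, j_\e)$ (or equivalently on the divergence of the momentum) rather than on the full $(\rho^\theta_\e, v^\theta_\e)$ family, and the oscillation frequency is $1/\e$ in the variable $\e E_\e$ (hence $1/\sqrt{\e}$ only after the square-root rescaling implicit in the phase $e^{it/\sqrt{\e}}$); making this precise is exactly the ``exact symmetrization'' you flag as the main obstacle, and it is where the smallness of $\eta$ enters. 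But as a roadmap your proposal matches the cited argument, and nothing further is expected here since the present paper simply invokes the result.
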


\begin{remark} If in \eqref{eq:correctors_initial}, $\div d_{\pm} (0)=0$, then the initial data are said to be well-prepared and there are no plasma oscillations in the limit $\e\to 0$.
\end{remark}


\bigskip

The main result of this paper is the following:

\begin{thm}
\label{thm1}
Let $\gamma$, $\delta_0$, 
and $C_0$ be positive constants.
Consider a sequence $(f_{0,\e})$ of non-negative initial data in $L^1$  for \eqref{vp} such that for all $\e \in (0,1)$, and all $x \in \bt^d$, 
\begin{itemize}
\item (uniform estimates) 
$$
\| f_{0,\e}\|_\infty\leq C_0, \quad \mathcal{E}(f_{0,\e}) \leq C_0,
$$
\item (compact support in velocity)
$$
f_{0,\e}(x,v)= 0 \quad \text{if  } |v| > \frac{1}{\e^\gamma},
$$
\item (analytic + perturbation)
There exists a function $\varphi: (0,1] \to \br^+$, with $\lim_{\e \to 0} \varphi(\e) = 0$ such that the following hold. Assume the following decomposition:
$$
f_{0,\e} = g_{0,\e} + h_{0,\e},
$$
where $(g_{0,\e})$ is a sequence of continuous functions
satisfying
$$
\sup_{\e\in (0,1)}\sup_{v \in \br^d} \, (1+|v|^2) \| g_{0,\e} (\cdot,v)\|_{B_{\delta_0}}  \leq C,
$$
admitting a limit $g_0$ in the sense of distributions.
Furthemore, $(h_{0,\e})$ is a sequence of functions 
satisfying for all $\e>0$
$$
W_2(f_{0,\e},g_{0,\e}) = \varphi(\e).$$

\end{itemize}


For all $\e \in (0,1)$, consider $f_\e(t)$  a global weak solution of \eqref{vp} with initial condition $f_{0,\e}$, in the sense of Arsenev \cite{Ar}. Define the filtered distribution function
\begin{equation} 
\widetilde{f}_\e(t,x,v) := f_\e \Big(t,x,v-\frac{1}{i}(d_+(t,x)e^{\frac{it}{\sqrt \e}}-d_-(t,x)e^{-\frac{it}{\sqrt \e}})\Big)
\end{equation}
where $(d_\pm)$ are defined in \eqref{eq:correctors}.

There exist $T>0$ and $g(t)$ a weak solution on $[0,T]$ of \eqref{formal} with initial condition $g_0$ such that
$$
\lim_{\e \to 0} \sup_{t \in [0,T]}  W_1(\widetilde{f}_\e(t), g(t)) = 0.
$$
Explicitly, we can take
\begin{itemize}
\item in two dimensions, $\varphi(\e)= \exp\left[ \exp\left( - \frac{K}{\e^{2(1+ \max(\beta,\gamma))}}\right) \right]$,
for some constant $K>0$, \ $\beta>2$;

\item in three dimensions, $\varphi(\e)= \exp\left[ \exp\left( - \frac{K}{\e^{2+ \max(38,3\gamma))}}\right) \right]$,
for some constant $K>0$.
\end{itemize}


\end{thm}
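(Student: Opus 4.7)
The plan is to combine Grenier's analytic well-posedness (Proposition \ref{grenier}) with a quantitative Wasserstein stability estimate between the Arsenev weak solution $f_\e$ and a reference analytic solution $g_\e$ built from the smooth part $g_{0,\e}$ of the data. Applying Proposition \ref{grenier} to $g_{0,\e}$ produces, on a uniform interval $[0,T]$, an analytic Vlasov--Poisson solution $g_\e(t) = \int_\M \rho_\e^\theta \delta_{v=v_\e^\theta}\, d\mu(\theta)$ with uniform analytic bounds on $(\rho_\e^\theta, v_\e^\theta)$, together with the limit $g(t)$ solving \eqref{formal} and satisfying $\widetilde{g}_\e \to g$ in strong norms. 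Because the filter $(x,v) \mapsto \bigl(x,\, v - \tfrac{1}{i}(d_+ e^{it/\sqrt{\e}} - d_- e^{-it/\sqrt{\e}})\bigr)$ is a bijection with Lipschitz constant uniform in $\e$ on $[0,T]$, one has $W_1(\widetilde{f}_\e, \widetilde{g}_\e) \le C\, W_1(f_\e, g_\e)$. By the triangle inequality and Grenier's convergence it therefore suffices to show $\sup_{t \in [0,T]} W_1(f_\e(t), g_\e(t)) \to 0$ at a rate that beats the stability loss.

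For the stability estimate, I would take $\pi_0$ an optimal $W_2$-coupling of $f_{0,\e}$ and $g_{0,\e}$, so $\int (|x-y|^2 + |v-w|^2)\, d\pi_0 = \varphi(\e)^2$. Using the smooth Lagrangian flow of $g_\e$ given by Grenier, and a DiPerna--Lions-type Lagrangian representation of $f_\e$, set
$$
Q_\e(t) := \int \bigl( |X_\e^f(t) - X_\e^g(t)|^2 + |V_\e^f(t) - V_\e^g(t)|^2 \bigr)\, d\pi_0,
$$
where the superscripts denote the two flows launched from coupled initial data. Differentiating in time, and invoking the analytic Lipschitz bound $\|\nabla_x E_{g_\e}\|_\infty \le C$ for the reference field, yields
$$
\frac{d}{dt} Q_\e(t) \le C\, Q_\e(t) + C \int |V_\e^f - V_\e^g|\; |E_{f_\e}(X_\e^f) - E_{g_\e}(X_\e^f)|\, d\pi_0.
$$

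To estimate the remaining force term I would invoke a Loeper-type inequality of the form
$$
\| E_{f_\e} - E_{g_\e} \|_{L^p(\bt^d)} \le C\, \e^{-2}\, \max(\|\rho_{f_\e}\|_\infty, \|\rho_{g_\e}\|_\infty)^{\alpha_p}\, W_2(\rho_{f_\e}, \rho_{g_\e})^{\beta_p},
$$
the $\e^{-2}$ coming from inverting $-\e^2\Delta$, with a logarithmic refinement available in dimension two. The density bounds are propagated from $\|f_{0,\e}\|_\infty \le C_0$ (preserved by Arsenev's construction) and from the compact-support assumption $|v| \le \e^{-\gamma}$ at $t=0$: a crude Gronwall argument on characteristics, using the size of $\|E_\e\|_\infty$, controls the spreading of velocity support on $[0,T]$ by some $\e^{-\gamma'}$ with $\gamma'$ close to $\gamma$, giving $\|\rho_{f_\e}(t)\|_\infty \le C \e^{-d\gamma'}$. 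Combining everything and using $W_2(\rho_{f_\e}, \rho_{g_\e}) \le W_2(f_\e, g_\e) \le \sqrt{Q_\e}$ produces an Osgood-type inequality
$$
\frac{d}{dt} Q_\e \le A(\e)\, Q_\e\, \Omega(Q_\e),
$$
with $A(\e) \lesssim \e^{-2(1+\max(\beta,\gamma))}$ in 2D (where $\Omega$ carries a logarithm coming from the log-Loeper variant and $\beta>2$ is a moment index) and $A(\e) \lesssim \e^{-(2+\max(38,3\gamma))}$ in 3D. Integration yields $Q_\e(t) \le Q_\e(0)^{\exp(-C A(\e) T)}$ in 2D, and $Q_\e(t) \le Q_\e(0) \exp(C A(\e) T)$ in 3D after absorbing a $Q_\e^{1/2}$ factor; the double-exponential form of $\varphi(\e)$ stated in the theorem is precisely what is needed to drive $Q_\e(t) \to 0$.

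The hard part is managing the accumulation of $\e^{-1}$ losses: inverting the rescaled Poisson equation contributes $\e^{-2}$; the $L^\infty$ bound on the macroscopic density contributes $\e^{-d\gamma'}$; the moment-propagation estimates needed in 3D to convert $L^2$ Loeper control into a usable $L^p$ or pointwise bound account for the specific exponent $38$; and the separation between 2D and 3D outcomes is driven by the logarithmic versus polynomial character of Loeper's inequality. A careful accounting of these losses, together with the justification of the Lagrangian representation for the Arsenev solution via DiPerna--Lions theory, is what pins down the explicit form of $\varphi(\e)$. Once $Q_\e(t) \to 0$ is established, $W_1 \le W_2 \le \sqrt{Q_\e}$ combined with the triangle inequality and Grenier's strong filtered convergence concludes the proof.
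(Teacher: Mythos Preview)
Your overall architecture matches the paper's: triangle inequality $W_1(\widetilde f_\e,g)\le W_1(\widetilde f_\e,\widetilde g_\e)+W_1(\widetilde g_\e,g)$, Grenier's result for the second term, the filter being uniformly Lipschitz for the first, and a Loeper-type $W_2$ stability estimate requiring $L^\infty$ control on both densities. However, there are two genuine gaps in how you account for the quantitative losses.

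\medskip
\textbf{The velocity-support control is not a crude Gronwall.} You write that ``a crude Gronwall argument on characteristics, using the size of $\|E_\e\|_\infty$, controls the spreading of velocity support on $[0,T]$ by some $\e^{-\gamma'}$ with $\gamma'$ close to $\gamma$.'' This fails: a naive bound $\|E_\e\|_\infty\lesssim \e^{-2}\|\rho_\e\|_\infty^{1-1/d}$ feeds back into $\dot V\lesssim \e^{-2}V^{d-1}$, which closes on itself and gives no uniform-in-$\e$ control. The paper spends two full subsections on this. In 2D one rescales to $\frac1\e\bt^2$, uses the energy bound to get $\|\eta_\e\|_{\L^2}\lesssim \e^{-1}$, splits the Green kernel at scale $R=(1+\|\eta_\e\|_\infty)^{-1}$, and obtains $\|F_\e\|_\infty\lesssim \e^{-1}[\log(\e^{-1}(1+V))]^{1/2}$; this yields $V_\e(t)\lesssim \e^{-\max\{\beta,\gamma\}}$ for any $\beta>2$. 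The parameter $\beta$ in the theorem is precisely this growth exponent, not a ``moment index''. In 3D the paper invokes the Batt--Rein bootstrap (a Pfaffelmoser-type iteration): starting from $h_2\lesssim t\,h_1^{4/9}$ and applying the key lemma three times drives the exponent on $h_1$ below $1/6$, at the cost of accumulating factors of $\e^{-1}$ that produce $V_\e(t)\lesssim \e^{-\max\{38/3,\gamma\}}$, hence $\|\rho_{f_\e}\|_\infty\lesssim \e^{-\max\{38,3\gamma\}}$. The exponent $38$ comes entirely from this velocity-support bootstrap, not from ``moment-propagation estimates needed to convert $L^2$ Loeper control into a usable $L^p$ bound''.

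\medskip
\textbf{The Loeper estimate does not distinguish 2D from 3D.} You claim the 2D/3D split is ``driven by the logarithmic versus polynomial character of Loeper's inequality'' and that in 3D one integrates to $Q_\e(t)\le Q_\e(0)\exp(CA(\e)T)$. In the paper's Theorem 3.1 the log-Lipschitz bound on $\nabla\Psi_i$ (Lemma 3.2) holds in every dimension, producing the same Osgood inequality $\dot Q\le C A(t)\,Q\log(16d/Q)$ and hence the same double-exponential structure $Q(t)\le 16d\exp\bigl[\log(Q(0)/16d)\,e^{-C\int_0^t A}\bigr]$ in both 2D and 3D. This is why $\varphi(\e)$ is a double exponential in both cases; the only dimensional difference is the power of $\e$ inside $A$, and that comes from the velocity-support estimates above, not from the stability lemma.
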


\begin{remark} Let us notice that in Theorem \ref{thm1} we consider sequences of initial conditions with compact support in velocity (yet, we allow the support to grow polynomially as $\e \to 0).$  The reason is that, in the spirit of \cite{HI}, we rely on a Wasserstein stability estimate to control the difference between the unperturbed analytic solution and the perturbed one. In dimensions $2$ and $3$, as we shall explain below, 
we need $L^\infty$ bounds on the densities of  \emph{both} solutions. In order to have such a bound on the $L^\infty$ norm of the densities  we need to control the support in velocity. Such a condition was not required in our previous paper \cite{HI} since, in the 1D case, we could use a ``weak-strong'' Wasserstein stability estimate and only a $L^\infty$ bound on the unperturbed solution was needed.
\end{remark}

\begin{remark} In the opposite direction, we recall that in the one dimensional case there is a negative result stating that an initial rate of convergence of the form $\varphi(\e)= \e^s$ for any $s>0$ is not sufficient to ensure the convergence for positive times. This is the consequence of \emph{instability} mechanisms described in \cite{Gr99} and \cite{HKH}. As a matter of fact, we expect that an analogue of this result holds also in higher dimension.
\end{remark}

\section{Overview of the paper}

The following of the paper is entirely devoted to the proof of Theorem \ref{thm1}. 
Let us describe the main steps that are needed to achieve this convergence result.
\begin{enumerate}

\item We first revisit Loeper's Wasserstein stability estimates \cite{Loe} on the torus $\bt^d$ and with quasineutral scaling, which allows us to  control
$W_2 (f_1,f_2)$, where $f_1$ and $f_2$ are two given solutions of \eqref{vp} in terms of the initial distance  $W_2 (f_1(0),f_2(0))$ and of the $L^\infty
$ norm of the densities $\rho_1= \int_{\br^d} f_1 \,dv$ and $\rho_2= \int_{\br^d} f_2 \,dv$. This first step is performed in Section~ \ref{sec:wasserstein}.

\item  In the one dimensional case studied in our previous work \cite{HI} we had a ``weak-strong'' type stability estimate; as a consequence a control of the $L^\infty$ norm of the density of the perturbed solution $f_\e$ (following the notations of Theorem~\ref{thm1}) was not required.

In the higher dimensional case under study, such an estimate is needed. To achieve this, we give quantitative estimates of the growth of the support in velocity for solutions of~\eqref{vp}. We separate the $2$ and the $3$-dimensional case since different tools are involved.

While in the two dimensional case studied in Section~\ref{sec:2D} only elementary considerations are needed, in the three dimensional case, we shall use a more involved bootstrap argument due to Batt and Rein \cite{BR}, see Section~\ref{sec:3D}.

\item We finally conclude in Section~\ref{sec:thm} by combining the results of the two previous steps and Grenier's convergence result stated in Proposition~\ref{grenier}.

\end{enumerate}

\section{Proofs of Steps 1 and 2}

\subsection{$W_2$ stability estimate}
\label{sec:wasserstein}

We start by giving the relevant $W_2$ stability estimate, adapting from the work of Loeper \cite{Loe}.

\begin{thm}
\label{thm:Loeper}
Let $f_1, f_2$ be two weak solutions of the Vlasov-Poisson system \eqref{vp}, and set 
$$
\rho_1:= \int_{\br^d} f_1 \, dv, \quad \rho_2= \int_{\br^d} f_2 \, dv.
$$ 
Define the function
\be
\label{eq:At}
A(t):=\biggl[1+\frac{1}{\e^2}\sqrt{\|\rho_2(t)\|_{L^\infty(\mathbb T^d)} }
\Bigl[\max\bigl\{\|\rho_1(t)\|_{L^\infty(\mathbb T^d)},
 \|\rho_2(t)\|_{L^\infty(\mathbb T^d)}\bigr\} \Bigr]^{1/2}\biggr]+\frac{\|\rho_1(t)-1\|_{L^\infty(\mathbb T^d)}}{\e^2},
 \ee
 and assume that $A(t) \in L^1([0,T])$ for some $T>0$. 
 Also, set
 \be
 \label{eq:Ft}
F_t[z]:= 16\,d\,e^{\log\left(\frac{z}{16\,d}\right)\exp\left[C_0\int_0^t A(s)\,ds\right]}\qquad \forall\,z \in [0,d],\,t \in [0,T].
 \ee
 Then there exists a dimensional constant $C_0 >1$ such that, if $W_2(f_1(0), f_2(0) )\leq d$, then
 for all $t \in [0,T]$,
\be
\label{wass}
W_2(f_1(t), f_2(t) )\leq \left\{
\begin{array}{ll}
F_t[W_2(f_1(0), f_2(0) )] 
&\text{ if }F_T[W_2(f_1(0), f_2(0) )]\leq d,\\
d\,  e^{C_0\int_{0}^t A(s)\,ds}
&\text{ if }F_T[W_2(f_1(0), f_2(0) )]> d.
\end{array}
\right.
\ee
\end{thm}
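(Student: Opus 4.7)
The plan is to mimic Loeper's stability proof, adapted to the torus with the quasineutral scaling $-\e^2 \Delta U = \rho - 1$. Let $\pi_0 \in \Gamma(f_1(0), f_2(0))$ be an optimal coupling for $W_2^2$, and let $(X_i, V_i)(t)$ denote the characteristic flow of \eqref{vp} associated to $f_i$. I would define
\[
Q(t) := \int \bigl(|X_1(t) - X_2(t)|^2 + |V_1(t) - V_2(t)|^2\bigr)\, d\pi_0,
\]
so that the image of $\pi_0$ under $(X_1,V_1) \times (X_2,V_2)$ is a coupling of $f_1(t)$ and $f_2(t)$ and therefore $W_2^2(f_1(t), f_2(t)) \le Q(t)$. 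The goal is to derive an Osgood-type differential inequality for $Q$ and to integrate it.

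Differentiating under the integral yields $\tfrac{1}{2} Q'(t) = \int (X_1-X_2)\cdot(V_1-V_2)\,d\pi_0 + \int (V_1-V_2)\cdot (E_1(X_1)-E_2(X_2))\,d\pi_0$. The first term is controlled by $Q$ via Cauchy-Schwarz. I would split the field difference as $E_1(X_1) - E_2(X_2) = [E_1(X_1) - E_1(X_2)] + [E_1(X_2) - E_2(X_2)]$ and treat the two pieces separately. For the second piece, Loeper's elliptic $L^2$--$W_2$ lemma, after absorbing the $\e^2$ prefactor of Poisson's equation, gives
\[
\|E_1 - E_2\|_{L^2(\bt^d)} \le \frac{1}{\e^2} \max\bigl(\|\rho_1\|_\infty, \|\rho_2\|_\infty\bigr)^{1/2} W_2(\rho_1,\rho_2).
\]
Combining with $\int |E_1(X_2)-E_2(X_2)|^2 d\pi_0 \le \|\rho_2\|_\infty \|E_1-E_2\|_{L^2}^2$, Cauchy-Schwarz against $|V_1 - V_2|$, and $W_2(\rho_1,\rho_2) \le W_2(f_1,f_2) \le Q^{1/2}$, this term produces exactly the middle summand of $A(t)$ times $Q(t)$. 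For the first piece I would use the log-Lipschitz regularity of $E_1$, which in the present scaling takes the form
\[
|E_1(x) - E_1(y)| \le \frac{C\|\rho_1-1\|_\infty}{\e^2}\, |x-y|\bigl(1 + |\log|x-y||\bigr),
\]
valid for $|x-y|$ small. A Cauchy-Schwarz and an application of Jensen's inequality to the concave function $s \mapsto s(1+|\log s|)$ translates this into a contribution bounded by $\frac{C\|\rho_1-1\|_\infty}{\e^2}\, Q(t)\bigl(1 + |\log Q(t)|\bigr)$, i.e.\ the last summand of $A(t)$ multiplied by $Q(t)(1+|\log Q(t)|)$.

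Collecting the three contributions yields
\[
Q'(t) \le C_0 A(t)\, Q(t)\bigl(1 + |\log Q(t)|\bigr).
\]
Dividing by $Q(1+|\log Q|)$, whose antiderivative is $-\log\bigl(1+|\log Q|\bigr)$ in the small-$Q$ regime, and integrating gives $1 + |\log Q(t)| \le (1+|\log Q(0)|)\exp\bigl(C_0\int_0^t A(s)\,ds\bigr)$. After rescaling by the factor $16 d$ to absorb absolute constants and to keep the logarithm negative for $Q(0) \le d$, this is precisely the double-exponential bound $F_t[W_2(f_1(0),f_2(0))]$ appearing in \eqref{eq:Ft}, establishing the first case of \eqref{wass}. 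The second case is a straightforward fallback: once the double-exponential bound would exceed $d$, the log-Lipschitz factor is pinned at $1+|\log d|$ and the Osgood inequality degrades to a linear Gronwall $Q' \le C A\, Q$, yielding the exponential alternative.

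The main obstacles I anticipate are twofold. First, one must verify Loeper's $L^2$--$W_2$ elliptic lemma on $\bt^d$ in the presence of the neutralizing background $1$ and the $\e^{-2}$ prefactor; the Benamou-Brenier/Monge-Amp\`ere argument carries over, but the constants have to be tracked carefully. Second, the Jensen step only works because $s \mapsto s(1+|\log s|)$ is concave on a bounded interval, which is exactly what forces the two-case structure of the conclusion. A minor additional point is that $f_1,f_2$ are only weak solutions in Arsenev's sense, so the coupling by characteristics requires the DiPerna-Lions flow framework; this is standard once $\rho_1, \rho_2 \in L^\infty$ is available, which is built into the very statement through $A(t)$.
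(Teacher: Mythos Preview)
Your proposal is correct and follows essentially the same route as the paper: define $Q(t)$ via the push-forward of an optimal coupling by the two characteristic flows, differentiate, split $E_1(X_1)-E_2(X_2)$ into a log-Lipschitz piece (controlled via Jensen and $\|\rho_1-1\|_\infty/\e^2$) and an $L^2$ piece (controlled via Loeper's $W_2$--$L^2$ elliptic lemma with the $\e^{-2}$ factor), and integrate the resulting Osgood inequality, falling back on plain Gronwall once $Q$ exceeds $d$. One small correction: the concave function to which Jensen is applied is $s\mapsto s\log^2(16d/s)$ (the square comes from having squared the log-Lipschitz bound before integrating), not $s\mapsto s(1+|\log s|)$; the paper makes this explicit as the function $H$, but your final bound $Q(1+|\log Q|)$ is nonetheless the right output after taking the square root.
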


%
%
%
%
%
%
%

\begin{proof}[Proof of Theorem \ref{thm:Loeper}]

Before starting the proof we recall two important estimates that follow immediately from 
\cite[Theorem 2.7]{Loe} and the analogue of \cite[Lemma 3.1]{Loe} on the torus
(notice that $|x-y| \leq \sqrt{d}$ for all $x,y \in \mathbb T^d$): 
\begin{lem}
\label{lem:Loeper}
Let $\Psi_i:\mathbb T^d\to \mathbb R$ solve
$$
-\e^2 \Delta \Psi_i=\rho_i-1,\qquad i=1,2.
$$
Then
$$
 \e^2 \|\nabla \Psi_1-\nabla \Psi_2\|_{L^2(\mathbb T^d)} \leq \Bigl[\max\bigl\{\|\rho_1\|_{L^\infty(\mathbb T^d)},
 \|\rho_2\|_{L^\infty(\mathbb T^d)}\bigr\} \Bigr]^{1/2}\,W_2(\rho_1,\rho_2),
$$
$$
\e^2|\nabla \Psi_i(x) - \nabla \Psi_i(y)| \leq C\,|x-y|\,\log\biggl(\frac{4 \sqrt{d}}{|x-y|} \biggr) \,\|\rho_i-1\|_{L^\infty(\mathbb T^d)}\qquad\forall\,x,y \in\mathbb T^d,\, i=1,2.
$$
\end{lem}

To prove Theorem \ref{thm:Loeper}, we define the quantity
$$
Q(t):=\int_0^1|Y_1(t,S_1(s))-Y_2(t,S_2(s))|^2\,ds
$$
where
$$
S_1,S_2:[0,1]\to \mathbb T^d\times \mathbb R^d
$$
are measurable maps such that $(S_i)_\# ds=f_i(0)$ and 
$$
W_2(f_1(0),f_2(0))^2=\int_0^1|S_1(s)-S_2(s)|^2\,ds,
$$
while $Y_i=(X_i,V_i)$ solve the ODE
$$
\begin{array}{l}
\dot X_i=V_i,\\
\dot V_i=-\nabla\Psi_i(t,X_i)
\end{array}
$$
with the initial condition $Y_i(0,x,v)=(x,v)$.

Thus, thanks to \cite[Corollary 3.3]{Loe} it follows that $f_i(t)=Y_i(t)_\#f_i(0)=[Y_i(t,S_i)]_\#ds$.

Then we compute
\begin{align*}
\frac{1}{2}\frac{d}{dt} Q(t)&=\int_0^1 [X_1(t,S_1)-X_2(t,S_2)]\,[V_1(t,S_1)-V_2(t,S_2)]\,ds\\
&+\int_0^1 [V_1(t,S_1)-V_2(t,S_2)]\,\bigl[\nabla\Psi_1\bigl(t,X_1(t, S_1)\bigr)-\nabla\Psi_2\bigl(t,X_2(t, S_2)\bigr)\bigr]\,ds.
\end{align*}
By Cauchy-Schwarz inequality, we have
\begin{align*}
\frac{1}{2}\frac{d}{dt} Q(t) &\leq 
\sqrt{\int_0^1|X_1(t,S_1)-X_2(t,S_2)|^2\,ds}\sqrt{\int_0^1|V_1(t,S_1)-V_2(t,S_2)|^2\,ds}\\
&+\sqrt{\int_0^1|V_1(t,S_1)-V_2(t,S_2)|^2\,ds}\sqrt{\int_0^1\bigl|\nabla\Psi_1\bigl(t,X_1(t, S_1)\bigr)-\nabla\Psi_2\bigl(t,X_2(t, S_2)\bigr)\bigr|^2\,ds}\\
&\leq \,Q(t)+\sqrt{Q(t)}\sqrt{\int_0^1\bigl|\nabla\Psi_1\bigl(t,X_1(t, S_1)\bigr)-\nabla\Psi_1\bigl(t,X_2(t, S_2)\bigr)\bigr|^2\,ds}\\
&+\sqrt{Q(t)}\sqrt{\int_0^1\bigl|\nabla\Psi_1\bigl(t,X_2(t, S_2)\bigr)-\nabla\Psi_2\bigl(t,X_2(t, S_2)\bigr)\bigr|^2\,ds}.
\end{align*}
Using the definition of the push-forward, we finally get
\begin{align*}
\frac{1}{2}\frac{d}{dt} Q(t)
&\leq \,Q(t)+\sqrt{Q(t)}\sqrt{\int_0^1\bigl|\nabla\Psi_1\bigl(t,X_1(t, S_1)\bigr)-\nabla\Psi_1\bigl(t,X_2(t, S_2)\bigr)\bigr|^2\,ds}\\
&+\sqrt{Q(t)}\sqrt{\int_{\mathbb T^d\times \mathbb R^d} |\nabla \Psi_1(t,x)-\nabla\Psi_2(t,x)|^2f_2(t,x,v)\,dx\,dv}\\
&\leq \,Q(t)+\sqrt{Q(t)}\sqrt{\int_0^1\bigl|\nabla\Psi_1\bigl(t,X_1(t, S_1)\bigr)-\nabla\Psi_1\bigl(t,X_2(t, S_2)\bigr)\bigr|^2\,ds}\\
&+\sqrt{\|\rho_2(t)\|_{L^\infty(\mathbb T^d)} }\sqrt{Q(t)}\sqrt{\int_{\mathbb T^d} |\nabla \Psi_1(t,x)-\nabla\Psi_2(t,x)|^2\,dx}.
\end{align*}
We now apply Lemma \ref{lem:Loeper} to the last two terms and we bound them respectively by
$$
C\,\frac{\|\rho_1(t)-1\|_{L^\infty(\mathbb T^d)}}{\e^2} \sqrt{Q(t)}\sqrt{\int_0^1\bigl|X_1(t, S_1)-X_2(t, S_2)\bigr|^2\log^2\biggl(\frac{4 \sqrt{d}}{\bigl|X_1(t, S_1)-X_2(t, S_2)\bigr|}\biggr)\,ds}
$$
and
$$
\frac{1}{\e^2}\sqrt{\|\rho_2(t)\|_{L^\infty(\mathbb T^d)} }
\Bigl[\max\bigl\{\|\rho_1(t)\|_{L^\infty(\mathbb T^d)},
 \|\rho_2(t)\|_{L^\infty(\mathbb T^d)}\bigr\} \Bigr]^{1/2}\,W_2(\rho_1(t),\rho_2(t)).
$$
Since $W_2(\rho_1(t),\rho_2(t)) \leq \sqrt{Q(t)}$ (see for instance \cite[Lemma 3.6]{Loe}) we conclude that
\begin{multline*}
\frac{1}{2}\frac{d}{dt} Q(t) \leq \biggl[1+\frac{1}{\e^2}\sqrt{\|\rho_2(t)\|_{L^\infty(\mathbb T^d)} }
\Bigl[\max\bigl\{\|\rho_1(t)\|_{L^\infty(\mathbb T^d)},
 \|\rho_2(t)\|_{L^\infty(\mathbb T^d)}\bigr\} \Bigr]^{1/2}\biggr]\,Q(t)\\
 +C\,\frac{\|\rho_1(t)-1\|_{L^\infty(\mathbb T^d)}}{\e^2} \sqrt{Q(t)}\sqrt{\int_0^1\bigl|X_1(t, S_1)-X_2(t, S_2)\bigr|^2\log^2\biggl(\frac{4 \sqrt{d}}{\bigl|X_1(t, S_1)-X_2(t, S_2)\bigr|}\biggr)\,ds}.
\end{multline*}
Noticing that $\bigl|X_1(t, S_1)-X_2(t, S_2)\bigr| \leq \sqrt{d}$ (since $X_1$ and $X_2$ are points on the torus)
and
$$
\log\biggl(\frac{4 \sqrt{d}}z\biggr) =\frac{1}{2}\log\biggl(\frac{16\,d}{z^2}\biggr) \qquad \,\forall\,z >0,
$$
we get
\begin{multline*}
\int_0^1\bigl|X_1(t, S_1)-X_2(t, S_2)\bigr|^2\log^2\biggl(\frac{4 \sqrt{d}}{\bigl|X_1(t, S_1)-X_2(t, S_2)\bigr|}\biggr)\,ds\\
=\frac{1}{4} \int_0^1\bigl|X_1(t, S_1)-X_2(t, S_2)\bigr|^2\log^2\biggl(\frac{16\,d}{\bigl|X_1(t, S_1)-X_2(t, S_2)\bigr|^2}\biggr)\,ds\\
=\frac{1}{4}\int_0^1g(s)\log^2\biggl(\frac{16\,d}{g(s)}\biggr)\,ds,
\end{multline*}
where we set $g(s):=\bigl|X_1(t, S_1)-X_2(t, S_2)\bigr|^2$.

Hence, since the  function
\begin{equation}
\label{eq:H}
z\mapsto H(z):=\left\{
\begin{array}{ll}
z \log^2\left(\frac{16\,d}{z}\right) & \text{for $0 \leq z \leq d$},\\
d\log^2(16)& \text{for $z \geq d$},\\
\end{array}
\right.
\end{equation}
is concave and increasing,
recalling that $g \leq d$ and 
applying Jensen's inequality to $H$ we get
\begin{align*}
\frac{1}{2}\frac{d}{dt} Q(t)
&\leq \biggl[1+\frac{1}{\e^2}\sqrt{\|\rho_2(t)\|_{L^\infty(\mathbb T^d)} }
\Bigl[\max\bigl\{\|\rho_1(t)\|_{L^\infty(\mathbb T^d)},
 \|\rho_2(t)\|_{L^\infty(\mathbb T^d)}\bigr\} \Bigr]^{1/2}\biggr]\,Q(t)\\
 &\qquad \qquad \qquad \qquad +C\,\frac{\|\rho_1(t)-1\|_{L^\infty(\mathbb T^d)}}{\e^2} \sqrt{Q(t)}\sqrt{H\biggl(\int_0^1g(s)\,ds\biggr)}\\
 &\leq \biggl(1+\frac{1}{\e^2}\sqrt{\|\rho_2(t)\|_{L^\infty(\mathbb T^d)} }
\Bigl[\max\bigl\{\|\rho_1(t)\|_{L^\infty(\mathbb T^d)},
 \|\rho_2(t)\|_{L^\infty(\mathbb T^d)}\bigr\} \Bigr]^{1/2}\biggr)\,Q(t)\\
 &\qquad \qquad \qquad \qquad +C\,\frac{\|\rho_1(t)-1\|_{L^\infty(\mathbb T^d)}}{\e^2} \sqrt{Q(t)}\sqrt{H\bigl(Q(t)\bigr)},
 \end{align*}
where for the last inequality we used that $\int_0^1g(s)\,ds \leq Q(t)$.

In particular, recalling the definition of $A(t)$ in \eqref{eq:At},
by \eqref{eq:H} we deduce that
there exists a dimensional constant $C_0>0$ such that
\be
\label{eq:ODE1}
\frac{d}{dt} Q(t) \leq C_0\,A(t) \,Q(t)\log \biggl(\frac{16\,d}{Q(t)}\biggr)  \qquad \text{as long as $Q(t) \leq d$},
\ee
while
\be
\label{eq:ODE2}
\frac{d}{dt} Q(t) \leq C_0\,A(t) \,Q(t)  \qquad \text{when $Q(t) \geq d$}.
\ee
In particular, assuming $Q(0)\leq d$, by \eqref{eq:ODE1} we get
\be
\label{eq:sol1}
Q(t)\leq 16\,d\,e^{\log\left(\frac{Q(0)}{16\,d}\right)\exp\left[C_0\int_0^t A(s)\,ds\right]}=:F_t[Q(0)]
\ee
as long as $Q(t) \leq d$, which is the case in particular if  $F_t[Q(0)]\leq d$.
On the other hand, if there is some time $t_0$ such that $F_{t_0}[Q(0)]=d$,
since $Q(t_0)\leq F_{t_0}[Q(0)] $ by
 \eqref{eq:ODE2} we get
\be
\label{eq:sol2}
Q(t) \leq d \,e^{C_0\int_{t_0}^t A(s)\,ds} \leq d \,e^{C_0\int_{0}^t A(s)\,ds} \qquad \text{for} \ t\ge t_0.
\ee
Noticing that $F_t$ is monotone in $t$, we deduce in particular that if $F_T[Q(0)]\leq d$ and $Q(0) \leq d$ then \eqref{eq:sol1} holds,
while if $F_T[Q(0)]> d$ and $Q(0) \leq d$ then one can simply apply \eqref{eq:sol2}.
Finally, if $Q(0) > d$ then we apply \eqref{eq:ODE2} to get
$$
Q(t) \leq Q(0) \,e^{C_0\int_0^t A(s)\,ds}.
$$
Combining these three estimates and recalling that $Q(0)=W_2(f_1(0),f_2(0))^2$ while $Q(t) \geq W_2(f_1(t),f_2(t))^2$, this concludes the proof.
\end{proof}

%
%
%

\subsection{Control of the growth of the support in velocity in 2D}
\label{sec:2D}

In this section, $d=2$. Our goal  is to obtain estimates on the growth in time of the support in velocity.
This allows us to get bounds for the $L^\infty$ norms of the local densities on some interval of time $[0,T]$. Recall that in the end, they will be used to apply the Wasserstein stability estimates proved in Section \ref{sec:wasserstein}.

%

For $f_\e$ a solution of \eqref{vp}, define 
$$
V_\e(t):= \sup \left\{ |v|\,: \, v \in \br^2, \, \exists x \in   \bt^2, f_\e(t,x,v) > 0\right\}.
$$
The key point of this Section is the following Proposition.
\begin{prop}
\label{growth-2D}
Suppose that
$$
\|f_\e(0)\|_\infty \leq C_0,\qquad \int\Bigl(|v|^2 + U_\e (0,x)\Bigr)f_\e(0,x,v)\,dv\,dx \leq C_0.
$$
Assume that $V_\e(0)\leq C_0 /\e^{\gamma}$, for some $\gamma>0$.
Let $T>0$ be fixed. For all $\beta >2$, there is $C_\beta>0$, such that we have for all $\e \in (0,1)$ and all $t \in [0,T]$, 
\be
V_\e(t) \leq C_\beta/ \e^{\max\{\beta,\gamma\}}.
\ee
Therefore, for all $\beta >2$, there is $C'_\beta>0$, such that, for all $\e \in (0,1)$ and all $t \in [0,T]$,
\be
\label{rho2D}
\| \rho_\e \|_\infty \leq C'_\beta/ \e^{2 \max\{\beta,\gamma\}}.
\ee
\end{prop}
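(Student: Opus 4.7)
The plan is to track the velocity support through the Vlasov characteristics and close a bootstrap argument that exploits the $L^2$-subcriticality of the two-dimensional Coulomb kernel. Writing $(X(t;x,v),V(t;x,v))$ for the characteristic flow associated with $\dot X = V$, $\dot V = E_\e(t,X)$ starting from $(x,v)$, the basic inequality
$$
V_\e(t) \leq V_\e(0) + \int_0^t \|E_\e(s)\|_{L^\infty(\bt^2)} \, ds
$$
reduces everything to a bound on $\|E_\e(s)\|_\infty$ in terms of $V_\e(s)$.

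First I would collect two pointwise density estimates. Since the Vlasov transport preserves $\|f_\e(t)\|_\infty \leq C_0$ and conservation of energy yields $\int |v|^2 f_\e \,dv\,dx \leq 2C_0$, optimizing the radial splitting
$\rho_\e(x) \leq \pi R^2 \|f_\e\|_\infty + R^{-2} \int |v|^2 f_\e(x,v)\,dv$
over $R>0$ gives the pointwise bound
$\rho_\e(x) \leq C \|f_\e\|_\infty^{1/2}\bigl(\int |v|^2 f_\e(x,v)\,dv\bigr)^{1/2}$,
and, after integrating in $x$, the uniform estimate $\|\rho_\e(t)\|_{L^2(\bt^2)} \leq C$. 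On the other hand the trivial bound $\rho_\e(x) \leq C_0\, |B(0,V_\e(t))|$ yields $\|\rho_\e(t)\|_{L^\infty(\bt^2)} \leq C V_\e(t)^2$.

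With these in hand I would estimate $E_\e$ by Young's convolution inequality. Writing $E_\e = -\e^{-2} \nabla G \ast (\rho_\e - 1)$, where $G$ is the Green's function of $-\Delta$ on $\bt^2$, and using that $\nabla G \in L^p(\bt^2)$ for every $p<2$ with $\|\nabla G\|_{L^p} \leq C (2-p)^{-1/p}$, one gets, for $p' = 2 + \eta$ with $\eta \in (0,1)$,
$$
\|E_\e(t)\|_\infty \leq \frac{C(\eta)}{\e^2}\,\|\rho_\e(t)-1\|_{L^{2+\eta}(\bt^2)}.
$$
Interpolating $\|\rho_\e\|_{2+\eta} \leq \|\rho_\e\|_2^{2/(2+\eta)} \|\rho_\e\|_\infty^{\eta/(2+\eta)}$ and plugging in the two density estimates produces
$$
\|E_\e(t)\|_\infty \leq \frac{C(\eta)}{\e^2}\, V_\e(t)^{2\eta/(2+\eta)}.
$$

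The final step is a continuity argument. Given $\beta > 2$, choose $\eta = \eta(\beta) > 0$ small enough that $2(2+\eta)/(2-\eta) < \beta$, set $\alpha = \max(\beta, \gamma)$, and assume inductively that $V_\e(s) \leq K/\e^\alpha$ on $[0,t]$ for a large constant $K$ to be fixed. Inserting this into the characteristic inequality gives
$$
V_\e(t) \leq \frac{C_0}{\e^\gamma} + \frac{C(\eta)\, K^{2\eta/(2+\eta)}\, T}{\e^{\,2+2\eta\alpha/(2+\eta)}},
$$
and the choice of $\eta$ guarantees $2+2\eta\alpha/(2+\eta) \leq \alpha$, while the exponent of $K$ is strictly less than one. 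Hence, for $K=K(\beta,\gamma,C_0,T)$ sufficiently large the bound $V_\e(t)\leq K/\e^\alpha$ reproduces itself, and a standard continuity argument closes the bootstrap on the whole of $[0,T]$. The density estimate \eqref{rho2D} is then an immediate consequence of $\|\rho_\e\|_\infty \leq C V_\e^2$. The main technical point to watch is that the constant $C(\eta)$ diverges as $\eta \to 0$ (reflecting the logarithmic failure of the relevant Riesz operator at $p'=2$), so one must freeze $\eta$ in terms of $\beta$ \emph{before} running the bootstrap and verify that the gain in the $\e$-exponent is strictly positive.
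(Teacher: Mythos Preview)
Your argument is correct and reaches the same conclusion by a route that is morally equivalent but technically more direct than the paper's. The paper first rescales $(t,x,v)\mapsto(t/\e,x/\e,v)$ to work on the large torus $\frac{1}{\e}\bt^2$ with an unscaled Poisson equation, then writes the kernel explicitly as $\frac{1}{2\pi}\frac{x}{|x|^2}+\e K_0(\e x)$ and splits the convolution into a near-field piece (controlled by $\|\eta_\e\|_{\infty}\le C V^2$) and a far-field piece (controlled by $\|\eta_\e\|_{L^2}\le C/\e$); this produces the sharper \emph{logarithmic} bound $\|F_\e\|_{\infty}\le C\bigl(1+\frac{1}{\e}[\log\frac{1}{\e}(1+V)]^{1/2}\bigr)$, which is then deliberately weakened to $(1+V)^\alpha$ via $(\log x)^{1/2}\le 1+r_\alpha x^\alpha$, and the resulting differential inequality $V'\le C_\alpha\e^{-1-\alpha}(1+V)^\alpha$ is closed by an explicit ODE comparison. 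You bypass both the rescaling and the near/far splitting by using Young's inequality with $\nabla G\in L^p(\bt^2)$ for $p<2$ and interpolating $\|\rho_\e\|_{L^{2+\eta}}$ between the uniform $L^2$ bound and $\|\rho_\e\|_\infty$; the blow-up $\|\nabla G\|_{L^p}\sim(2-p)^{-1/p}$ as $p\uparrow 2$ is precisely the polynomial avatar of the paper's logarithm. Your bootstrap and their comparison principle are then interchangeable ways of integrating the same sublinear inequality. Your version is cleaner to write down and avoids the auxiliary kernel lemma; theirs isolates the logarithmic loss explicitly, which makes transparent why the endpoint $\beta=2$ is excluded.
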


In order to prove this Proposition, we shall use for convenience the change of variables $(t,x,v) \mapsto (\frac{t}{\e}, \frac{x}{\e}, v)$. This leads us to consider, the following Vlasov-Poisson system, for $(x,v) \in \frac{1}{\e}\bt^2 \times \br^2$:
\be
\label{vp'}
 \left\{ \begin{array}{cccc}\pt_t g_\e+v\cdot \na_x g_\e+ F_\e\cdot \na_v g_\e=0,  \\
F_\e=-\na_x \Phi_\e, \\
-\Delta_x \Phi_\e=\int_{\br^2} g_\e\, dv - \int_{\frac{1}{\e} \bt^2 \times \br^2}  g_\e\, dv \, dx ,\\
g_\e\vert_{t=0}=g_{0,\e}\ge0,\\
  \int_{\frac{1}{\e} \bt^2 \times \br^2} g_{0,\e}\,dx\,dv=\frac{1}{\e^2}.
\end{array} \right.
\ee
We shall denote
$$
\eta_\e := \int_{\br^2} g_\e \, dv
$$
and define for all $t\ge 0$,
\be
V(t) := \sup \left\{ |v|, \, v \in \br^2, \, \exists x \in  \frac{1}{\e} \bt^2, g_\e(t,x,v) > 0\right\}=V_\e\biggl(\frac{t}{\e}\biggr).
\ee
In the following, for brevity, the notation $\L^p$, for $p \in [1,+\infty]$, will stand for $L^p\left(\frac{1}{\e} \bt^2 \times \br^2\right)$ or $L^p\left(\frac{1}{\e} \bt^2\right)$, depending on the context.

The main goal is now to prove the following Proposition, from which it is straightforward to deduce Proposition \ref{growth-2D}  by applying the result for $t= \frac{T}{\e}$.
\begin{prop}
\label{prop-growth2D}
Let $\beta>0$. Let $C_0>0$ such that for all $\e \in (0,1)$,
\be
\| g_{\e}(0)\|_{\L^\infty} \leq C_0, \quad \int_{\frac{1}{\e} \bt^2 \times \br^2}  \Bigl(|v|^2 + \Phi_\e (0,x)\Bigr)g_{\e}(0) \, dv dx \leq \frac{C_0}{\e^2}.
\ee
Then for all $\a \in (0,1)$, there exist $C_{\a},C_\a'>0$ such that for all $\e \in (0,1)$, and all $t \geq 0$, 
\be
V(t) \leq  \left(\frac{C_\a}{\e^{\a+1}}t + (1+ V(0))^{1-\a}\right)^{1/(1-\a)} -1.
\ee
\end{prop}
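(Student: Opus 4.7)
The plan is the classical one of controlling the velocity support along characteristics of \eqref{vp'}: since $\dot v(t) = F_\e(t, X(t))$, we have $\dot V(t) \le \|F_\e(t)\|_{\L^\infty}$, and the whole task reduces to a pointwise elliptic estimate of the form
$$
\|F_\e(t)\|_{\L^\infty} \le C_\a\,(1+V(t))^\a / \e^{\a+1}, \qquad \a \in (0,1),
$$
which then integrates by separation of variables to yield the stated explicit bound on $V(t)$.

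To build two working bounds on the local density $\eta_\e$, I use the support and the energy separately. The support hypothesis and $L^\infty$ propagation of $g_\e$ under the Vlasov flow give the cheap pointwise estimate $\|\eta_\e(t)\|_{\L^\infty} \le C\,V(t)^2$. For an $L^2$ control, I exploit the energy conservation for \eqref{vp'}: rewriting the potential part via $\int \Phi_\e\, g_\e\,dv\,dx = \|\nabla\Phi_\e\|_{\L^2}^2$ (from the Poisson equation with the normalization $\int \Phi_\e = 0$) extracts the kinetic bound $\int g_\e|v|^2\,dv\,dx \le C/\e^2$; combined with the 2D pointwise interpolation $\eta_\e(x)\le C\|g_\e\|_{\L^\infty}^{1/2}\bigl(\int g_\e|v|^2\,dv\bigr)^{1/2}$ this yields $\|\eta_\e(t)\|_{\L^2}\le C/\e$.

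Writing then $F_\e = \nabla G_\e\ast(\eta_\e-1)$ on $\frac1\e\bt^2$, with periodic Green's function satisfying $|\nabla G_\e(x)|\lesssim 1/|x|$ on the fundamental domain, I split the convolution at a scale $R>0$: H\"older with $\L^p$ for some $p>2$ (so that $1/|x|$ is in $L^{p'}_{\mathrm{loc}}$) on the near zone $|x-y|<R$, and the crude $\|\eta_\e-1\|_{\L^1}\le 2/\e^2$ combined with $1/|x-y|\le 1/R$ on the far zone. Plugging in the interpolation $\|\eta_\e-1\|_{\L^p}\le C(1+V)^{2(1-2/p)}\e^{-2/p}$ coming from Step~1 and optimizing in $R$ produces the target estimate with $\a:=2(1-2/p)\in(0,1)$ running as $p$ varies in $(2,4)$. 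Separation of variables in $\dot V\le C_\a(1+V)^\a/\e^{\a+1}$ then completes the proof.

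The main technical hurdle is precisely this elliptic estimate: three scales compete --- the $L^\infty$ bound $V^2$, the $L^2$ bound $C/\e$ coming from the energy, and the far-field contribution from the torus of diameter $\sim 1/\e$ --- and the balance between them through the splitting scale $R$ and the H\"older exponent $p>2$ (forced by the local nonintegrability of $1/|x|^2$ against itself) is what produces both the $V^\a$ and the $\e^{\a+1}$ factors in the final bound. The endpoint $p=2$ would introduce a logarithmic loss and must be avoided.
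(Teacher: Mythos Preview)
Your approach is correct and reaches the stated conclusion, but the elliptic estimate for $\|F_\e\|_{\L^\infty}$ follows a genuinely different decomposition from the paper's. The paper splits the convolution using $L^\infty$ on the near zone $\{|x-x'|<R\}$ and Cauchy--Schwarz (hence $L^2$) on the far zone, which first produces the sharp logarithmic bound
\[
\|F_\e\|_{\L^\infty} \le C\Bigl(1 + \tfrac{1}{\e}\bigl[\log\tfrac{1}{\e}(1+V)\bigr]^{1/2}\Bigr),
\]
and only afterwards weakens $\sqrt{\log x}\le 1 + r_\a x^\a$ to reach the polynomial form. Your route --- H\"older with $L^p$, $p>2$, on the near zone (so that $|x|^{-1}\in L^{p'}_{\mathrm{loc}}$) and the crude $L^1$ bound on the far zone --- bypasses the logarithm entirely and lands directly on a polynomial estimate. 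This is more direct and avoids the endpoint manipulation, at the price of not recording the sharper logarithmic intermediate bound. The final integration of the differential inequality by separation of variables is the same content as the paper's comparison argument, phrased more economically.

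One small point of imprecision: the claim that ``optimizing in $R$ produces the target estimate with $\a=2(1-2/p)$'' does not quite hold as written. If you genuinely minimise the sum of the near and far contributions in $R$, the exponent on $(1+V)$ comes out as $2\a/(2+\a)$ rather than $\a$, with a correspondingly better $\e$-dependence ($\e^{-1}$ in place of $\e^{-(1+\a)}$). This is harmless --- indeed stronger than what is needed --- and a sub-optimal but explicit choice such as $R=\e^{\a-1}(1+V)^{-\a}$ recovers exactly the form $C_\a(1+V)^\a/\e^{1+\a}$ claimed in the proposition. Either way the argument closes.
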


We will use the following standard property of conservation of $L^p$ norms and energy for solutions to the Vlasov-Poisson system in the sense of Arsenev:
\begin{lem}For all $t\geq 0$, we have
\be
\label{apriori}
\| g_{\e}(t)\|_{\L^\infty} \leq C_0, \quad \int_{\frac{1}{\e} \bt^2 \times \br^2}  \Bigl(|v|^2 + \Phi_\e (t,x)\Bigr)g_{\e}(t) \, dv dx \leq \frac{C_0}{\e^2}.
\ee
\end{lem}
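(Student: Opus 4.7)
The plan is to propagate the bounds from $t=0$ using the standard conservation properties of the Vlasov-Poisson system in the sense of Arsenev, and then to read them off in the rescaled variables via the change of variables $(t,x,v) \mapsto (t/\e, x/\e, v)$ that relates \eqref{vp} and \eqref{vp'}.

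For the $\L^\infty$ bound, I would recall that an Arsenev weak solution $f_\e$ of \eqref{vp} is constructed as a weak-$\ast$ limit of smooth (or mollified) solutions, along each of which $f_\e$ is transported by the divergence-free phase-space field $(v, E_\e)$. Hence $\|f_\e(t)\|_{L^\infty} \leq \|f_{0,\e}\|_{L^\infty}$ at the regularized level, and this bound passes to the limit. Since $g_\e(t,x,v) = f_\e(\e t, \e x, v)$ by construction, the same bound transfers directly to $g_\e$ with the same constant $C_0$, giving the first assertion.

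For the energy bound, testing the smooth Vlasov equation against $|v|^2/2$ and using the Poisson equation together with integration by parts yields $\frac{d}{dt}\mathcal{E}(f_\e(t)) = 0$ for classical solutions, and the Arsenev construction then furnishes the inequality $\mathcal{E}(f_\e(t)) \leq \mathcal{E}(f_{0,\e}) \leq C_0$ for weak solutions. Under the rescaling, a direct computation shows that the kinetic part $\frac{1}{2}\int |v|^2 g_\e\,dv\,dx$ is exactly $\e^{-2}$ times the original kinetic energy $\frac{1}{2}\int |v|^2 f_\e\,dv\,dx$, while a short integration by parts using $-\Delta_x \Phi_\e = \eta_\e - \mathrm{const}$ together with the zero-mean normalization of $\Phi_\e$ on $\frac{1}{\e}\bt^2$ identifies $\int \Phi_\e g_\e\,dv\,dx$ with $\int |\nabla_x \Phi_\e|^2\,dx$, which in turn equals $\e^{-2}$ times the original field energy $\e^2 \int_{\bt^2} |\nabla_x U_\e|^2\,dx$. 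Summing the two contributions and invoking $\mathcal{E}(f_{0,\e}) \leq C_0$ yields the stated bound $C_0/\e^2$.

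The only delicate point is justifying the passage of the energy identity from classical solutions to Arsenev weak solutions as an inequality, but this is precisely the standard content of \cite{Ar} and presents no substantive obstacle; likewise, the rescaling identities are routine. Hence the lemma follows from classical facts combined with the dimensional bookkeeping of the quasineutral rescaling.
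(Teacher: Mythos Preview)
Your proposal is correct. The paper itself does not supply a proof of this lemma at all: it simply introduces the statement with the sentence ``We will use the following standard property of conservation of $L^p$ norms and energy for solutions to the Vlasov-Poisson system in the sense of Arsenev'' and leaves it at that. Your sketch therefore fills in exactly the details the authors omit---the transport structure giving the $\L^\infty$ bound, the energy inequality inherited from the Arsenev construction, and the bookkeeping under the rescaling $(t,x,v)\mapsto(t/\e,x/\e,v)$ that produces the factor $\e^{-2}$ on the right-hand side. The identification $\int \Phi_\e g_\e\,dv\,dx = \int |\nabla_x \Phi_\e|^2\,dx$ via the zero-mean normalization and the Poisson equation is the right way to link the potential term to the field energy, and the scaling identities you state are accurate (up to an innocuous factor of $2$ that is absorbed into the generic constant $C_0$).
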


We also rely on the following Lemma about the Green kernel of the Laplacian on $\frac{1}{\e} \bt^2$ (obtained from standard results on the Green kernel of the Laplacian on $\bt^2$, after rescaling). We refer for instance to Caglioti and Marchioro \cite{CaMa}.
\begin{lem}
\label{kernel}
There exists $K_0 \in C^\infty(\bt^2;\br^2)$ such that, denoting
$$
K_\e(x) = \frac{1}{2\pi} \frac{x}{|x|^2} + \e K_0 (\e x),
$$
we have for all $x \in [-1/\e,1/\e]^2$,
$$
F_\e(x) = \int_{[-1/\e,1/\e]^2} K_\e(x-y)  \big[\eta_\e(y)-1\big] \, dy.
$$
\end{lem}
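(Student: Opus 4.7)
The proof proceeds by representing $\Phi_\e$ via a Green's function on the torus $\frac{1}{\e}\bt^2$ and extracting the universal two-dimensional Coulomb singularity through a rescaling to the standard torus $\bt^2$.

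\textbf{Step 1 (Green's function on $\bt^2$).} Let $G$ be the mean-zero periodic distribution on $\bt^2$ solving $-\Delta G = \delta_0 - 1$. It is classical that $G$ is smooth away from $0 \in \bt^2$ and has a logarithmic singularity at the origin: in a fundamental domain around $0$,
$$G(\tilde x) = -\frac{1}{2\pi}\log|\tilde x| + G_0(\tilde x),$$
with $G_0$ smooth. Indeed, $-\frac{1}{2\pi}\log|\tilde x|$ is the fundamental solution of $-\Delta$ on $\br^2$, so the remainder distributionally satisfies $-\Delta G_0 = -1$ and hence is smooth near $0$ by elliptic regularity. Multiplying $\log|\tilde x|$ by a smooth cutoff supported in a fundamental domain around $0$ then produces a globally smooth and genuinely periodic vector field $K_0 \in C^\infty(\bt^2;\br^2)$ which encodes the regular part of $-\nabla G$.

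\textbf{Step 2 (rescaling to $\frac{1}{\e}\bt^2$).} Define $G_\e(x):=G(\e x)$, a function on $\frac{1}{\e}\bt^2$. The two-dimensional Dirac scaling $\delta_0(\e\,\cdot) = \e^{-2}\delta_0$ combined with the chain rule gives
$$-\Delta_x G_\e(x) = -\e^2 (\Delta G)(\e x) = \e^2\bigl(\delta_0(\e x) - 1\bigr) = \delta_0(x) - \e^2,$$
so $G_\e$ is (up to a constant) the mean-zero Green's function on $\frac{1}{\e}\bt^2$. The normalization $\int_{\frac{1}{\e}\bt^2 \times \br^2} g_{0,\e}\, dx\, dv = 1/\e^2$ in \eqref{vp'} forces $\int(\eta_\e - 1)\,dx = 0$, so the representation
$$\Phi_\e(x) = \int G_\e(x-y)\bigl(\eta_\e(y) - 1\bigr)\, dy$$
holds modulo an additive constant irrelevant to $F_\e$.

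\textbf{Step 3 (kernel identification and force formula).} Differentiating the scaling relation and substituting the decomposition of $\nabla G$ yields
$$K_\e(x) := -\nabla_x G_\e(x) = -\e\,(\nabla G)(\e x) = \frac{1}{2\pi}\frac{x}{|x|^2} + \e K_0(\e x),$$
which is the asserted form. Taking $-\nabla_x$ of the convolution formula for $\Phi_\e$ (interchange justified by local integrability in 2D of the Coulomb kernel $x/|x|^2$) and identifying the fundamental domain of $\frac{1}{\e}\bt^2$ with $[-1/\e,1/\e]^2$ via periodicity of $\eta_\e$ yields the stated integral representation of $F_\e$.

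\textbf{Main obstacle.} The delicate point is producing $K_0 \in C^\infty(\bt^2;\br^2)$ as a genuinely smooth \emph{and} periodic field on the torus, since the naive expression $-\nabla G(\tilde x) - \frac{1}{2\pi}\frac{\tilde x}{|\tilde x|^2}$ is only smooth locally near $0$ and is not intrinsically periodic because $\log|\tilde x|$ fails to be. This is overcome either by a smooth cutoff/periodization of the logarithm localized around $0$, or equivalently by a Fourier-series analysis showing that the regular part of $-\nabla G$ is a uniformly convergent trigonometric series with smooth sum; this is the classical content drawn from the reference \cite{CaMa}.
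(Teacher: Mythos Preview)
Your proposal is correct and follows exactly the approach the paper indicates: the paper does not give a detailed proof of this lemma but simply states that it is ``obtained from standard results on the Green kernel of the Laplacian on $\bt^2$, after rescaling'' and refers to \cite{CaMa}, which is precisely the route you have written out in Steps~1--3. Your discussion of the ``main obstacle'' (making the regular part $K_0$ genuinely smooth and periodic on $\bt^2$) is also the right point to flag, and is exactly what is borrowed from the cited reference.
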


The key ingredient is the following Lemma, in which we obtain some appropriate $\L^\infty$ bound for the electric field, which allows us to control the growth of the support in velocity.
\begin{lem}
\label{lem-firstbounds}
We have the following bounds.
\begin{enumerate}
\item There is a constant $C_1>0$ such that for all $\e \in (0,1)$, and all $t \geq 0$,
\be
\label{bound:eta}
\|\eta_\e(t)\|_{\L^2} \leq \frac{C_1}{\e}, \quad \|\eta_\e(t)\|_{\L^\infty} \leq {C_1} V(t)^2.
\ee
\item There is a constant $C_2>0$ such that for all $\e \in (0,1)$, and all $t\geq 0$,
\be
\| F_\e(t,\cdot) \|_{\L^\infty} \leq C_2 \left(1+  \frac{1}{\e} \left[\log \frac{1}{\e}(1+V(t))\right]^{1/2} \right).
\ee

\item For any $\a>0$, there is a constant $C_\a$ such that for all $0\leq t' \leq t $,
\be
\label{eq:Ca}
V(t) \leq V(t') + \frac{C_\a}{\e^{1+\a}}\int_{t'}^{t} (1+ V(s))^\a \, ds.
\ee
\end{enumerate}
\end{lem}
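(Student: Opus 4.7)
\medskip

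\noindent\textbf{Plan for the proof of Lemma \ref{lem-firstbounds}.}

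For item (1), the $\L^\infty$ bound on $\eta_\e$ is immediate: since $g_\e(t,x,v)=0$ for $|v|>V(t)$, integration of $g_\e \leq C_0$ over a ball of radius $V(t)$ in velocity gives $\eta_\e \leq \pi C_0 V(t)^2$. For the $\L^2$ bound, the idea is the classical moment/energy interpolation. At a fixed $x$, split the velocity integral at radius $R$: $\eta_\e(x)\leq \pi R^2\|g_\e\|_\infty + R^{-2}\int|v|^2 g_\e(x,v)\,dv$, and optimize in $R$. This yields $\eta_\e(x)^2\leq C\|g_\e\|_\infty \int|v|^2 g_\e(x,v)\,dv$; integrating in $x$ and using the a priori bound from \eqref{apriori} (noting that $\int\Phi_\e g_\e = \int|\nabla\Phi_\e|^2\geq 0$ so that the kinetic part is controlled by the total energy $\leq C_0/\e^2$) gives $\|\eta_\e\|_{\L^2}^2\leq C/\e^2$.

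For item (2), I would use Lemma \ref{kernel} and split
$$
F_\e(x)=\frac{1}{2\pi}\int \frac{x-y}{|x-y|^2}\bigl[\eta_\e(y)-1\bigr]\,dy+\e\int K_0(\e(x-y))\bigl[\eta_\e(y)-1\bigr]\,dy.
$$
For the main singular piece, cut the integration domain at a ball $B_r(x)$: on $B_r$ bound the density by $\|\eta_\e\|_\infty+1$ and integrate the $1/|x-y|$ singularity to get a contribution $\lesssim (\|\eta_\e\|_\infty+1)\,r$; on the complement apply Cauchy--Schwarz, using $\int_{B_r^c \cap \frac{1}{\e}\bt^2}|x-y|^{-2}\,dy \lesssim \log(1/(\e r))$ and $\|\eta_\e-1\|_{\L^2}\leq C/\e$, to obtain $\lesssim \frac{1}{\e}\sqrt{\log(1/(\e r))}$. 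The $K_0$ term is handled by $\L^2\times\L^2$ duality, giving a bounded contribution of order $1/\e$ after the rescaling $z=\e(x-y)$. Choosing $r=(1+\|\eta_\e\|_\infty)^{-1}\sim(1+V(t))^{-2}$ balances the two main contributions and yields the stated bound $\|F_\e\|_{\L^\infty}\leq C_2\bigl(1+\e^{-1}[\log((1+V(t))/\e)]^{1/2}\bigr)$.

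For item (3), I use the transport nature of \eqref{vp'}: along characteristics $\dot X=V$, $\dot V=F_\e(t,X)$, so the support radius satisfies $V(t)\leq V(t')+\int_{t'}^t\|F_\e(s,\cdot)\|_{\L^\infty}\,ds$. Plugging in the bound from (2) and applying the elementary inequality $(\log z)^{1/2}\leq C_\a z^\a$ valid for $z\geq 1$ and any $\a>0$, we get $\sqrt{\log((1+V(s))/\e)}\leq C_\a ((1+V(s))/\e)^\a$, which transforms the logarithmic factor into $\e^{-\a}(1+V(s))^\a$. Combining this with the trivial $1\leq (1+V(s))^\a$ to absorb the constant term, the resulting integral inequality is precisely \eqref{eq:Ca}.

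The only non-routine step is the optimization in (2): one needs to be careful that the natural choice of $r$ does not hit the boundary of the fundamental domain (which is ensured since $V\geq 0$ implies $r\leq 1$, comfortably less than $1/\e$), and that the smooth corrector $K_0$ does not produce worse than $1/\e$ growth. Once these points are cleared up, the rest is bookkeeping and the application of Grönwall-type considerations in the proof of Proposition \ref{prop-growth2D} becomes straightforward by solving the differential inequality associated with \eqref{eq:Ca}.
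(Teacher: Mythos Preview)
Your proposal is correct and follows essentially the same route as the paper's proof: the same interpolation-and-optimize argument for $\|\eta_\e\|_{\L^2}$, the same near/far splitting of the singular kernel with the choice $R=(1+\|\eta_\e\|_{\L^\infty})^{-1}$, and the same characteristics argument together with the elementary bound $(\log z)^{1/2}\le 1+r_\a z^\a$. The only cosmetic difference is the treatment of the smooth corrector $K_0$: the paper uses the cruder $\L^\infty\times\L^1$ pairing ($\e\|K_0\|_\infty\|\eta_\e-1\|_{\L^1}\le C/\e$) rather than your $\L^2\times\L^2$ duality after rescaling, but both yield the same $O(1/\e)$ contribution.
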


\begin{proof}[Proof of Lemma \ref{lem-firstbounds}]In this proof, $C>0$ will stand for an universal constant that may change from line to line.
\begin{enumerate}
\item By the following interpolation argument, we have for all $R>0$
$$
\eta_\e=\int_{\br^2}g_\e\,dv = \int_{|v|\leq R}g_\e\,dv+ \int_{|v|> R}g_\e\,dv
\leq \|g_\e\|_\infty R^2 + \frac{1}{R^2}\int_{\br^2}|v|^2g_\e\,dv,
$$
so by optimizing with respect to $R$ we deduce that
 there is a $C>0$ such that for all $t \geq 0, x \in \frac{1}{\e}\bt^2$,
$$
|\eta_\e|(t,x) \leq C \left( \int g_\e(t,x,v) |v|^2 \, dv \right)^{1/2}
$$
By \eqref{apriori}, we deduce the first estimate of \eqref{bound:eta}.

For what concerns the $\L^\infty$ estimate for $\eta_\e$, it is a plain consequence of the definition of $V(t)$, which controls the support in velocity.

\item By Lemma \ref{kernel}, there holds for all $t \geq 0, x \in [-1/\e,1/\e]^2$,
$$
|F_\e|(t,x) \leq \frac{1}{2\pi} \int_{[-1/\e,1/\e]^2} \frac{1}{|x-x'|} |\eta_\e(t,x')-1|\, dx' + \e \| K_0\|_\infty \|(\eta_\e-1)\|_{\L^1}.\\
$$
We observe that, since $\|(\eta_\e-1)\|_{\L^1(\frac{1}{\e}\bt^2)} \leq \|\eta_\e\|_{\L^1(\frac{1}{\e}\bt^2)}+\|1\|_{\L^1(\frac{1}{\e}\bt^2)}=\frac{2}{\e^2}$,
$$
\e \| K_0\|_\infty \|(\eta_\e-1)\|_{\L^1} \leq \frac{C}{\e}.
$$
Let $R>0$ to be fixed later. We have, using the Cauchy-Schwarz inequality,
\begin{align*}
 \int_{[-1/\e,1/\e]^2} &\frac{1}{|x-x'|} |\eta_\e(t,x') -1| \, dx' \\
 &=  \int_{|x-x'|<R} \frac{1}{|x-x'|} |\eta_\e(t,x')-1| \, dx' +  \int_{|x-x'|\geq R} \frac{1}{|x-x'|} |\eta_\e(t,x')-1| \, dx' \\
 &\leq C R (\|\eta_\e\|_{\L^{\infty}}+1) + \left(\|\eta_\e\|_{\L^{2}} + \frac{1}{\e}\right) \left(\int_{|x'|\geq R, \, x' \in [-1/\e,1/\e]^2} \frac{1}{|x'|^2}  \, dx'\right)^{1/2} \\
 &\leq C R (\|\eta_\e\|_{\L^{\infty}}+1) + \left(\|\eta_\e\|_{\L^{2}} + \frac{1}{\e}\right) \left(\log \frac{C}{\e R} \right)^{1/2}.
\end{align*}
We choose $R= \frac{1}{\|\eta_\e\|_{\L^{\infty}}+1}$, which yields, 
$$
\|F_\e\|_{\L^\infty} \leq C \left(1 + \left(\|\eta_\e\|_{\L^{2}} + \frac{1}{\e}\right) \left(\log \frac{1}{\e} (1+ \|\eta\|_{\L^{\infty}}) \right)^{1/2}\right).
$$
Using Point 1., we obtain the claimed estimate.
\item Let $\a>0$. Let $0\leq t' \leq t $ and let $(x,v)$ such that $g_\e(t',x,v) \neq 0$. Introduce the characteristics $(X(s,t',x,v),\xi(s,s,x,v))$ satisfying for $s \geq t'$ for the system of ODEs
\be
\label{charac}
 \left\{ 
\begin{aligned}
&\frac{d}{ds} X(s,t',x,v)= \xi(s,t',x,v), \quad X(t',t',x,v)=x,\\
&\frac{d}{ds} \xi(s,t',x,v)= F_\e(s, X(s,t',x,v)), \quad \xi(t',t',x,v)=v.
\end{aligned}
\right.
\ee
We have
$$
\xi(t,t',x,v) = v + \int_{t'}^{t}F_\e(s,X(s,t',x,v)) \, ds.
$$
Therefore, we have, using Point 2.,
\begin{align*}
|\xi|(t,t',x,v) &\leq  |v| + \int_{t'}^{t}|F_\e|(s,X(s,t',x,v)) \, ds \\
&\leq |v| + \frac{C_2}{\e}\int_{t'}^{t} \left(1+   \left[\log \frac{1}{\e}(1+V(s))\right]^{1/2} \right) \, ds.
\end{align*}
Since $g_\e$ satisfies \eqref{vp'}, it is thus constant along the characteristics \eqref{charac}, and we have
$$
g_\e(t,X(t,t',x,v),\xi(t,t',x,v))= g_\e(t', x,v).
$$
By definition of $V(t)$ and $V(t')$, we obtain
$$
V(t) \leq V(t') + \frac{C_2}{\e}\int_{t'}^{t}\left(1+   \left[\log \frac{1}{\e}(1+V(s))\right]^{1/2} \right) \, ds.
$$
In order to get the polynomial bound on $V(t)$, let $r_\a>0$ such that for all $x \geq 1$,
$$
(\log x )^{1/2} \leq 1+ r_\a x^\a.
$$
We thus get
$$
V(t) \leq V(t') + \frac{C_2(2+ r_\a)}{\e^{1+\a}}\int_{t'}^{t}  (1+V(s))^{\a}  \, ds,
$$
which proves our claim, taking $C_\a:= C_2(2+ r_\a)$.
 \end{enumerate}
\end{proof}

Equipped with this result, we can finally proceed with the proof of Proposition \ref{prop-growth2D}.
\begin{proof}[Proof of Proposition \ref{prop-growth2D}]

We begin by observing that dividing by $1/(t-t')$ in both sides of \eqref{eq:Ca} and  letting $t'\to t$ 
we deduce that
\be
\label{eq:derV}
\frac{d}{dt} V(t) \leq \frac{C_\a}{\e^{1+\a}}(1+V(t))^\a.
\ee
We will obtain the claimed bound by a comparison principle. To this end, introduce a small parameter $\mu>0$ and define
$$
W_\mu(t):= \left( \frac{2[C_\a+\mu](1-\a)}{\e^{\a+1}} t + (1+V(0)+ \mu)^{1-\a}\right)^{\frac{1}{1-\a}}.
$$
By construction, it satisfies for $t \geq 0 $
\be
\label{eq:derW}
\frac{d}{dt}  W_\mu(t)= \frac{C_\a+\mu}{\e^{1+\a}}  W_\mu(t)^\a.
\ee
and 
$$
W_\mu(0) =1 + V(0) + \mu >1+ V(0).
$$
We claim that $W_\mu(t)\geq 1+V(t)$ for all $t$.
Indeed, let
$$
t_0:=\inf \{t>0\,:\,W_\mu(t)< 1+V(t)\},
$$ 
and assume by contradiction that $t_0<+\infty$.
Notice that because $\mu>0$ we have $t_0>0$.
Then by continuity at the time $t_0$ we get
$$
W_\mu(t_0)= 1+V(t_0).
$$
By \eqref{eq:derV} and \eqref{eq:derW}, we have
\begin{align*}
\frac{d}{dt} (V (t) - W_\mu(t) -1)_{|t=t_0} &\leq \frac{C_\a}{\e^{1+\a}}(1+V(t_0))^\a - \frac{C_\a+\mu}{\e^{1+\a}}  W_\mu(t_0)^\a \\
&\leq - \frac{\mu}{\e^{1+\a}}  W_\mu(t_0)^\a  < 0.
\end{align*}
This is a contradiction with the definition of $t_0$.

Hence we obtained that for all $\mu>0$ and all $t\geq 0$,
$$
1+ V(t) \leq W_\mu(t).
$$
By taking the limit $\mu\to 0$, one finally gets for all all $t\geq 0$
$$
1+ V(t) \leq  \left( \frac{2C_\a(1-\a)}{\e^{\a+1}} t + (1+V(0))^{1-\a}\right)^{\frac{1}{1-\a}},
$$
which proves the Proposition.
\end{proof}


%
%
%
%
%
%

\subsection{Control of the growth of the support in velocity in 3D using Batt and Rein's estimates}
\label{sec:3D}

In this section, we deal with the case $d=3$. We consider as before, for $f_\e$ a solution of \eqref{vp},  
$$
V_\e(t):= \sup \left\{ |v|, \, v \in \br^3, \, \exists x \in   \bt^3, f_\e(t,x,v) > 0\right\}.
$$
We have in 3D the analogue of the key Proposition \ref{growth-2D} in 2D. 
\begin{prop}
\label{3Dcrucial}Suppose that
$$
\|f_\e(0)\|_\infty \leq C_0,\qquad \int\Bigl(|v|^2 + U_\e (0,x)\Bigr)f_\e(0,x,v)\,dv\,dx \leq C_0.
$$
Assume that $V_\e(0)\leq C_0 /\e^{\gamma}$, for some $\gamma>0$.
Let $T>0$ be fixed. There is $C'>0$, such that we have for all $\e \in (0,1)$ and all $t \in [0,T]$, 
\be
\label{3Dgrowth}
V_\e(t) \leq \frac{C'}{\e^{\max\{38/3,\gamma\}}}.
\ee
Then
\be
\label{rho3D}
\| \rho_\e \|_\infty \leq \frac{C'}{\e^{\max\{38, 3\gamma\}}}.
\ee
\end{prop}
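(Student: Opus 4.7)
The plan is to mimic the two-dimensional strategy of Subsection~\ref{sec:2D}. First, I would perform the change of variables $(t,x,v)\mapsto(t/\e,x/\e,v)$, which converts \eqref{vp} in 3D into a standard (non-quasineutral) Vlasov--Poisson system on the dilated torus $(1/\e)\bt^3$ with total mass $1/\e^3$ and total rescaled energy controlled by $C_0/\e^3$. In these coordinates, $V_\e(T)$ equals the rescaled velocity support $V(t)$ at time $t=T/\e$, so the problem reduces to obtaining a polynomial-in-time bound of the form $V(t)\leq C(1+t)^{b}/\e^{a}+V(0)$ and evaluating at $t=T/\e$. To match the stated exponent, the iteration must be tuned so that $a+b=38/3$.

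The core of the argument is a Batt--Rein-type bootstrap, which rests on three ingredients. First, a three-dimensional analogue of Lemma~\ref{kernel}, expressing the Green kernel of $-\Delta$ on $(1/\e)\bt^3$ as the Coulomb singularity $C\,x/|x|^3$ plus a smooth remainder of size $O(\e^2)$. Second, the pointwise bound $\|\eta_\e(t)\|_{\L^\infty}\leq C\,V(t)^{3}$ from the compact velocity support, together with a rescaled-energy estimate of the form $\|\eta_\e(t)\|_{\L^{5/3}}\leq C/\e^{\sigma}$ for an explicit $\sigma$. Third, an interpolation inequality of the type
\begin{equation*}
\|F_\e(t)\|_{\L^\infty}\leq C\Bigl(1+\|\eta_\e(t)\|_{\L^\infty}^{\theta}\|\eta_\e(t)\|_{\L^{5/3}}^{1-\theta}\Bigr),
\end{equation*}
obtained by splitting the convolution kernel into short-range and long-range pieces and optimizing over the cut-off radius, exactly as in Point~2 of Lemma~\ref{lem-firstbounds}. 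Combining these gives a bound of the form $\|F_\e(t)\|_{\L^\infty}\leq C\,\e^{-c_1}V(t)^{c_2}$ with effective exponent $c_2<1$. Integration along characteristics yields
\begin{equation*}
V(t)\leq V(t')+\int_{t'}^{t}\|F_\e(s)\|_{\L^\infty}\,ds,
\end{equation*}
and the comparison principle used in the proof of Proposition~\ref{prop-growth2D} then produces a bound $V(t)\leq C\,\e^{-a_1}(1+t)^{b_1}$. Feeding this improvement back into the interpolation inequality makes $c_2$ strictly smaller at the next step, and after finitely many iterations the exponents stabilize at a pair $(a,b)$ with $a+b=38/3$.

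The main obstacle is not conceptual but rather a careful accounting of the $\e$-dependence through every step of the bootstrap: mass scales like $\e^{-3}$, energy like $\e^{-3}$, and each use of H\"older or interpolation introduces additional fractional powers of $\e$. Unlike the 2D case one cannot close the estimate with a single splitting; the gain comes only after iterating, and the polynomial exponent $38/3$ is the output of that iteration, just as in the original Batt--Rein argument for the non-periodic problem. Once $V_\e(T)\leq C/\e^{\max(38/3,\gamma)}$ is proved, the density bound \eqref{rho3D} follows immediately from the pointwise estimate $\rho_\e(t,x)\leq C\,\|f_\e(0)\|_{L^\infty}\,V_\e(t)^{3}$, which multiplies the exponent by three and gives the factor $\e^{-\max(38,3\gamma)}$.
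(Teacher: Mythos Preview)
Your framing is right---rescale, control $V(T/\e)$, deduce \eqref{rho3D} from $\rho_\e\leq C\|f_\e(0)\|_\infty V_\e^3$---and the Batt--Rein bootstrap is indeed the key. But the bootstrap you describe is not the Batt--Rein bootstrap, and it does not close. Your pointwise interpolation (split $\int|x-y|^{-2}\eta_\e(y)\,dy$ at radius $R$ and optimize) gives exactly $\theta=4/9$: the near part is $CR\|\eta_\e\|_{\L^\infty}$, the far part $CR^{-4/5}\|\eta_\e\|_{\L^{5/3}}$. With $\|\eta_\e\|_{\L^\infty}\leq CV^3$ this yields $\|F_\e\|_{\L^\infty}\leq C\e^{-c_1}V^{4/3}$, so $c_2=4/3>1$, not $c_2<1$. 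The inequality $V'\leq C\e^{-c_1}V^{4/3}$ blows up in finite time rather than giving a polynomial bound, and the exponent $\theta$ is fixed by the kernel---no iteration of the field estimate improves it. The sentence ``feeding this improvement back\ldots makes $c_2$ strictly smaller'' is therefore incorrect; this is precisely why the two-dimensional argument of Lemma~\ref{lem-firstbounds} cannot be transplanted to $d=3$.

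What the paper does is quote Batt--Rein's lemma (Lemma~\ref{lem-BR}) as a black box. That lemma does not bound the instantaneous field; it bounds the time-integrated characteristic displacement $h_2(t)=\sup|\xi(s,\tau,x,v)-v|$ in terms of $h_1(t)=\sup_s\|\eta_\e(s)\|_{\L^\infty}+1$, and its proof relies on Pfaffelmoser's phase-space decomposition (good/bad/ugly sets relative to a fixed trajectory) averaged in time---a genuinely different mechanism from pointwise interpolation. The lemma upgrades an assumption $h_2\leq C^*t\,h_1^\beta$ to $h_2\leq Ct\bigl(C^{*\,4/3}h_1^{2\beta/3}+\e^{-3}(h_1^{1/6}+1/C^*)\bigr)$, so $\beta\mapsto 2\beta/3$ while the constant accrues factors of $\e^{-3}$. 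Starting from $\beta=4/9$ (this is \eqref{init}), three iterations reach $\beta<1/6$ with prefactor $\e^{-16/3}$; closing with $h_1\leq C(V(0)+h_2)^3$ gives $h_2\leq C\e^{-16/3}t(V(0)+h_2)^{1/2}$, hence $h_2\leq C\e^{-32/3}t^2$, and evaluating at $t=T/\e$ produces the exponent $32/3+2=38/3$. The $\e$-bookkeeping you anticipate is the right labor; the missing analytic input is Lemma~\ref{lem-BR}, not a pointwise interpolation inequality.
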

Note that this result involves exponents which are ``more degenerate'' than in the 2-D case.
We shall prove this result as an application of the estimates obtained by Batt and Rein in \cite{BR}.
The result of \cite{BR} is an adaptation to the case of the torus $\bt^3$ of the fundamental contribution of Pfaffelmoser \cite{Pfa} (see also \cite{Sch,Hor}), which allowed to build global classical solutions of the Vlasov-Poisson system in $\br^3 \times \br^3$. In $\br^3 \times \br^3$, it may be possible to get better estimates than \eqref{3Dgrowth} (i.e. with smaller exponents) by using dispersive effects, see \cite{Pfa,Sch,Hor} and more recently \cite{Pal}.
\medskip

In order to prove this Proposition, we shall use the change of variables $(t,x,v) \mapsto (\frac{t}{\e}, \frac{x}{\e}, v)$. This leads us to consider, the following Vlasov-Poisson system, for $(x,v) \in \frac{1}{\e}\bt^3 \times \br^3$:
\be
\label{vp"}
 \left\{ \begin{array}{ccc}\pt_t g_\e+v\cdot \na_x g_\e+ F_\e\cdot \na_v g_\e=0,  \\
F_\e=-\na_x \Phi_\e, \\
-\Delta_x \Phi_\e=\int_{\br^3} g_\e\, dv - \int_{\frac{1}{\e} \bt^3 \times \br^3}  g_\e\, dv \, dx ,\\
g_\e\vert_{t=0}=g_{0,\e}\ge0,\ \  \int_{\frac{1}{\e} \bt^3 \times \br^3} g_{0,\e}\,dx\,dv=\frac{1}{\e^3}.
\end{array} \right.
\ee
We shall denote as in the 2D case
$$
\eta_\e := \int_{\br^3} g_\e \, dv,
$$
and define for all $t\ge 0$,
\be
V(t) := \sup \left\{ |v|, \, v \in \br^3, \, \exists x \in  \frac{1}{\e} \bt^3, g_\e(t,x,v) > 0\right\}.
\ee
As before, in what follows we use the notation $\L^p$, for $p \in [1,+\infty]$, will stand for $L^p\left(\frac{1}{\e} \bt^3 \times \br^3\right)$ or $L^p\left(\frac{1}{\e} \bt^3 \right)$, depending on the context.

The main goal is now to prove the following Proposition, from which we deduce Proposition \ref{3Dcrucial} by choosing $t= \frac{T}{\e}$.
\begin{prop}
\label{prop-growth3D}
Suppose that
$$
\|f_\e(0)\|_{\L^\infty}  \leq C_0,\qquad \int_{\frac{1}{\e} \bt^3 \times \br^3} \Bigl(|v|^2 + \Phi_\e (0,x)\Bigr)f_\e(0,x,v)\,dv\,dx \leq C_0.
$$
Let $\gamma>0$. Let $C_0>0$ such that for all $\e \in (0,1)$,
\be
V(0) \le \frac{C_0}{\e^\gamma}.
\ee
Then there exists $C_1>0$ that for all $\e \in (0,1)$, and all $t \in [0,T]$, 
\be
V(t) \leq  \max\Big\{ \frac{C_0}{\e^\gamma} +   \left[- \frac{C_1}{\e^{32/3}} + \sqrt{\frac{C_1^2}{\e^{64/3}} T^4 +  4\frac{C_1}{\e^{32/3+ \gamma}} }\right], \,  \frac{C_0}{\e^\gamma}  + T^{-7/2} \Big\}.
\ee
\end{prop}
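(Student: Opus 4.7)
The statement is a polynomial-in-$\e^{-1}$ control on the velocity support of weak solutions of the rescaled Vlasov--Poisson system \eqref{vp"} on the large torus $\frac{1}{\e}\bt^3$. My plan is to adapt the classical Pfaffelmoser--Batt--Rein iteration of \cite{BR} to this rescaled setting, as already announced by the authors, carrying out every step with explicit tracking of the $\e$-powers produced by the rescaling, by the large spatial domain, and by the rescaled energy bound $\int |v|^2 g_\e\,dx\,dv \lesssim \e^{-3}$. The bootstrap quantity is $P(t):=1+\sup_{s\leq t}V(s)$, and the goal is to close a quadratic-in-$P$ integral inequality whose resolution, via the usual quadratic formula and a continuity argument modeled on the end of the proof of Proposition \ref{prop-growth2D}, yields the first branch of the stated bound; the second branch $V(0)+T^{-7/2}$ accounts for the alternative regime in which the Pfaffelmoser chain cannot be closed.

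\textbf{The three substantive steps.} First, Arsenev's weak solutions preserve both the $\L^\infty$ norm and the total energy, which in the rescaled variables give $\|g_\e(t)\|_{\L^\infty}\leq C_0$ together with $\int (|v|^2+\Phi_\e) g_\e\,dx\,dv\leq C_0/\e^3$. Combining with the interpolation argument of Lemma \ref{lem-firstbounds}, adapted from $d=2$ to $d=3$, yields an $\L^{5/3}$ bound on $\eta_\e$ (with the appropriate $\e$-loss from the rescaled energy) together with $\|\eta_\e(t)\|_{\L^\infty}\leq C V(t)^3$; the three-dimensional analogue on $\frac{1}{\e}\bt^3$ of Lemma \ref{kernel} then converts these into pointwise bounds on $F_\e$. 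Second, for a characteristic $(X,\xi)$ of \eqref{vp"} and a small auxiliary window $[t-\Delta,t]$, one estimates
$$|\xi(t)-\xi(t-\Delta)|\leq \int_{t-\Delta}^t |F_\e(s,X(s))|\,ds$$
by the classical Pfaffelmoser splitting: the source in the convolution representation of $F_\e$ is partitioned according to whether the source particles are close or far from $X(s)$ in space and have small, intermediate, or large velocities compared to the target velocity, and each piece is controlled by a different combination of the bounds of Step 1. Third, summing the resulting single-step estimates over a chain of subintervals covering $[0,t]$ and optimizing in $\Delta$ produces an inequality of the form
$$\bigl(P(t)+c(\e)\bigr)^2 \leq c(\e)^2 t^4 + b(\e,V(0)),$$
with $c(\e)=C_1\e^{-32/3}$ and $b(\e,V(0))=4C_1\e^{-32/3}V(0)\leq 4C_1\e^{-32/3-\gamma}$, from which the bound follows by the quadratic formula.

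\textbf{Main obstacle.} The delicate part is the $\e$-bookkeeping in the Pfaffelmoser splitting of Step 2: the large domain $\frac{1}{\e}\bt^3$ enlarges the far-field contributions, the rescaled energy is only $O(\e^{-3})$ rather than $O(1)$, and the initial velocity support $V(0)\lesssim \e^{-\gamma}$ compounds multiplicatively across the $\L^p$ interpolations. Ensuring that all these losses combine precisely into the exponents $32/3$, $64/3$ and $32/3+\gamma$ appearing in the final bound requires carefully matched choices of the splitting thresholds at each step of the iteration. This is the heart of the proof and is where the adaptation of \cite{BR} from $\bt^3$ to the rescaled domain does the essential work; all the remaining ingredients (a priori bounds, kernel decomposition, continuity argument) are direct analogues of results already used in Section \ref{sec:2D}.
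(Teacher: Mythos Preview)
Your overall strategy---adapting the Pfaffelmoser--Batt--Rein machinery to the large torus $\frac{1}{\e}\bt^3$, closing a quadratic inequality for the first branch, and attributing the $T^{-7/2}$ branch to the regime where the bootstrap fails---is exactly what the paper does. But the mechanism you describe in Step~3 is not the one that produces the stated exponents, and if taken literally it would not close.

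The paper does \emph{not} ``sum single-step estimates over a chain of subintervals and optimize in $\Delta$.'' Instead it introduces $h_1(t):=1+\sup_{s\le t}\|\eta_\e(s)\|_{\L^\infty}$ and $h_2(t):=\sup_{s,\tau\le t,\,(x,v)}|\xi(s,\tau,x,v)-v|$, and quotes directly from \cite{BR} a bootstrap lemma: if $h_2(t)\le C^*\,t\,h_1(t)^\beta$ then
\[
h_2(t)\le C\,t\Bigl(C^{*\,4/3}h_1(t)^{2\beta/3}+\e^{-3}\bigl(h_1(t)^{1/6}+(C^*)^{-1}\bigr)\Bigr),
\]
provided $h_1(t)^{-\beta/2}\le t$. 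The sole $\e$-dependence is the factor $\e^{-3}$, coming from the total mass $\|g_\e\|_{\L^1}=\e^{-3}$; no further $\e$-bookkeeping through the Pfaffelmoser splitting is needed. Starting from the a~priori $h_2\le C\,t\,h_1^{4/9}$ of \cite{BR} and applying this lemma three times drives the $h_1$-exponent along $4/9\to 8/27\to 16/81\to 32/243<1/6$, at which point the $h_1^{1/6}$ term dominates and the prefactor has grown to $C\e^{-16/3}$ (via $(\e^{-3})^{4/3}=\e^{-4}$, then $(\e^{-4})^{4/3}=\e^{-16/3}$). Feeding in $h_1\le C(V(0)+h_2)^3$ gives $h_2\le C_1\e^{-16/3}\,t\,(\e^{-\gamma}+h_2)^{1/2}$, and the quadratic formula produces the first branch. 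The alternative regime $h_1(t)^{-8/81}>t$, where the last iteration is not available, is handled by the initial $4/9$ bound and yields $h_2\lesssim t^{-7/2}$.

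So the ``delicate $\e$-bookkeeping in the Pfaffelmoser splitting'' you flag as the main obstacle is in fact almost absent: a single $\e^{-3}$ enters per bootstrap step, and the $32/3=2\cdot 16/3$ falls out of three iterations. What your Step~3 omits is precisely this outer loop---the repeated self-improvement of the exponent $\beta$---without which one is stuck at $\beta=4/9$ and cannot reach the quadratic inequality you write down (which, with the exact constants of the statement, looks reverse-engineered rather than derived). If you already had this iteration in mind, make it explicit; as written, the plan has a gap at the step that carries the whole weight of the result.
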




\begin{proof}[Proof of Proposition \ref{prop-growth3D}]
Consider the usual notations for characteristics of \eqref{charac} and introduce, as in \cite{BR},
\begin{align*}
&h_1(t):= \sup \{ \| \eta_\e(s) \|_{L^\infty}, \, 0 \leq s \leq t\} +1 , \\
&h_2(t):= \sup \{ |\xi(s,\tau,x,v)- v|, \, 0\leq s, \tau \leq t, \, (x,v) \in \frac{1}{\e} \bt^3 \times \br^3\}.
\end{align*}
We look for a bound on $h_2$, which will imply the control on $V(t)$.
To this end, we crucially rely on the key bootstrap result in the paper of Batt and Rein \cite{BR}, which we recall in the form of a lemma for the reader's convenience.

\begin{lem}[Batt, Rein]
\label{lem-BR}
Assume that there is $C^*>0$ and $\beta >0$ such that
 $$
 h_2(t) \leq C^* t  h_1(t)^\beta
 $$
then for some universal constant $C>0$ (that hereafter may change from line to line),
 \be
 \label{crucial}
 h_2(t) \leq C t \Big( C^{* \, 4/3} h_1^{2\beta/3}(t) + \frac{1}{\e^3}\left( h_1^{1/6}(t) + \frac{1}{C^*} \right) \Big),
\ee
if $h_1(t)^{-\beta/2} \leq t$.
\end{lem}

By using \cite[Eq. (5), Section 4, p.414]{BR}, there is $C>0$ independent of $\e$ such that for all $\e>0$ and $t \geq 0$, 
 \be
 \label{init}
h_2(t) \leq C  t  h_1(t)^{4/9}.
 \ee
 We deduce from Lemma \ref{lem-BR} and \eqref{crucial} that
  $$
h_2(t) \leq \frac{C}{ \e^3} t  h_1(t)^{8/27}, \quad \text{if  } h_1(t)^{-2/9} \leq t.
 $$
 Using \eqref{crucial} twice, we finally obtain
  $$
h_2(t) \leq \frac{C}{ \e^4}  t h_1(t)^{16/81},  \quad \text{if  } h_1(t)^{-4/27} \leq t
$$
 and since $\frac{32}{243}< \frac{1}{6}$, we get
 $$
h_2(t) \leq  \frac{C}{\e^{16/3}}  t h_1(t)^{1/6},  \quad \text{if  } h_1(t)^{-8/81} \leq t,
 $$
 Using the straightforward bound 
 $$
 h_1(t) \leq C \left( V(0) + h_2(t)\right)^3,
 $$
 we deduce that for all $\e \in (0,1)$ and $t > 0$, 
 $$
h_2(t) \leq \frac{C_1}{\e^{16/3}}  t  \left( \frac{1}{\e^\gamma} +h_2(t)\right)^{1/2},
 $$
if $h_1(t)^{-8/81} \leq t$. On the other hand, if $h_1(t)^{-8/81} > t$ then,
$$
h_1(t) < \frac{1}{t^{8/81}},
$$
and thus, by \eqref{init}, we get
$$
h_2(t) \leq C t^{-7/2}.
$$
We conclude that for all $\e \in (0,1)$ and $t > 0$,
 $$
h_2(t) \leq C \max\Big\{ \frac{1}{\e^{16/3}}  t  \left( \frac{1}{\e^\gamma} +h_2(t)\right)^{1/2}, \, t^{-7/2} \Big\},
 $$
 which yields, taking $t=T$,
 $$
h_2(T) \leq  \max\Bigg\{ \frac{1}{2} \left[- \frac{C_1}{\e^{32/3}} + \sqrt{\frac{C_1^2}{\e^{64/3}} T^4 + 4 \frac{C_1}{\e^{32/3+ \gamma}} }\right], \, T^{-7/2} \Bigg\}.
 $$
Therefore, for $t \in [0,T]$ we get
$$
V(t) \leq  \max\Bigg\{ \frac{C_0}{\e^\gamma} +  \left[- \frac{C_1}{\e^{32/3}} + \sqrt{\frac{C_1^2}{\e^{64/3}} T^4 + 4 \frac{C_1}{\e^{32/3+ \gamma}} }\right], \,  \frac{C_0}{\e^\gamma}  +  T^{-7/2} \Bigg\},
$$
which proves Proposition \ref{prop-growth3D}.
\end{proof}

%

\section{Proof of Theorem \ref{thm1}}
\label{sec:thm}

We prove the main Theorem by a perturbation argument, relying on the Wasserstein stability estimates of Theorem~\ref{thm:Loeper}.

Before that, we first state a Lemma about the effect of $x$-dependent translations in the velocity variable on the $W_1$ distance.
\begin{lem}
\label{lem:translation}
 Let $\mu, \nu$ be probability measures on $\mathcal{P}_1(\mathbb T^d \times \br^d)$ and let $\tilde \mu, \tilde \nu$ be probability measures on $\mathcal{P}_1(\mathbb T^d \times \br^d)$ defined as follows:
$$
<\tilde \mu, \phi(x,v)>:=<\mu, \phi(x, v-C(x))> \qquad \text{for all} \ \phi \in \text{Lip}\,(\mathbb T^d \times \br^d);
$$
$$
<\tilde \nu, \phi(x,v)>:=<\nu, \phi(x, v-C(x))>  \qquad \text{for all} \ \phi \in \text{Lip}\,(\mathbb T^d \times \br^d).
$$
Then 
\be\label{eq:invariance}
W_1(\tilde \mu, \tilde \nu) \le \(1+ \|D_{x}C\|_{L^\infty} \) W_1(\mu, \nu) \qquad \text{where} \ D_{x}C:= \big(\pt_{x_i} C_j(x) \big)_{0\le i, j\le d}.
\ee
\end{lem}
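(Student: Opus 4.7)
My plan is to reduce \eqref{eq:invariance} to a Lipschitz estimate on test functions via the Kantorovich--Rubinstein duality
$$
W_1(\tilde\mu,\tilde\nu)=\sup\left\{\int \phi\,d\tilde\mu-\int \phi\,d\tilde\nu\;:\;\phi\text{ is }1\text{-Lipschitz}\right\}.
$$
For any such $\phi$ on $\bt^d\times\br^d$, the very definition of $\tilde\mu,\tilde\nu$ rewrites
$$
\int \phi\,d\tilde\mu-\int \phi\,d\tilde\nu = \int \psi\,d\mu-\int \psi\,d\nu,\qquad \psi(x,v):=\phi(x,v-C(x)),
$$
so the whole question reduces to controlling the Lipschitz constant of $\psi$ in terms of $\|D_xC\|_{L^\infty}$.

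For the Lipschitz estimate on $\psi$, for any $(x,v),(x',v')\in\bt^d\times\br^d$ the $1$-Lipschitz property of $\phi$ gives
$$
|\psi(x,v)-\psi(x',v')|\le |(x,v-C(x))-(x',v'-C(x'))|\le |(x,v)-(x',v')|+|C(x)-C(x')|.
$$
I would then combine $|x-x'|\le |(x,v)-(x',v')|$ (valid for the natural Euclidean product norm) with the mean-value inequality $|C(x)-C(x')|\le\|D_xC\|_{L^\infty}|x-x'|$ to conclude that $\psi$ is $(1+\|D_xC\|_{L^\infty})$-Lipschitz. Plugging this back into the duality formula and taking the supremum over $\phi$ yields \eqref{eq:invariance}.

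An equivalent primal route, which I could present in place of duality, is to take an optimal plan $\gamma\in\Gamma(\mu,\nu)$ realising $W_1(\mu,\nu)$, push it forward under the product shear $((x,v),(x',v'))\mapsto((x,v-C(x)),(x',v'-C(x')))$ to get a coupling of $\tilde\mu,\tilde\nu$, and integrate the same pointwise inequality against $\gamma$. Either way, I do not foresee any serious obstacle: the argument is essentially bookkeeping, and the only delicate point is the implicit choice of metric on $\bt^d\times\br^d$. The estimate $|x-x'|\le|(x,v)-(x',v')|$ used to absorb the cross-term holds for every standard product norm (Euclidean, $\ell^1$, or $\ell^\infty$), so the multiplicative factor $1+\|D_xC\|_{L^\infty}$ is robust under these conventions.
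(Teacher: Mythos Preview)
Your proof is correct and follows essentially the same route as the paper: Kantorovich--Rubinstein duality together with the observation that $\psi(x,v)=\phi(x,v-C(x))$ has Lipschitz constant at most $(1+\|D_xC\|_{L^\infty})$ whenever $\phi$ is $1$-Lipschitz. The only difference is cosmetic---the paper phrases the Lipschitz bound as a gradient estimate on $\psi$, whereas you spell out the pointwise triangle inequality---and your alternative primal argument via pushing forward an optimal coupling is of course equivalent.
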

\begin{proof}
By the Kantorovich duality we have the following expression:
$$
W_1(\tilde \mu, \tilde \nu)= \underset{\| \vp\|_{Lip}\le 1}{\sup} \bigg[ \int \vp (x, v+C(x)) d\,\mu(x)  -\vp (x, v+C(x)) d\,\nu(x)\bigg].
$$
Let us denote $\psi(x, v)= \vp(x, v+C(x))$ and computing the gradient of $\psi$ we deduce that
$$
\|\psi \|_{Lip} \le \big(1+\|D_{x}C\|_{L^\infty} \big)\|\vp \|_{Lip}\le \big(1+\|D_{x}C\|_{L^\infty} \big)
$$
from which we obtain \eqref{eq:invariance}.
\end{proof}

We can now proceed with the proof of Theorem~\ref{thm1}. Let $f_{0,\e}, g_{0, \e}, h_{0,\e}$ satisfy the hypotheses of Theorem \ref{thm1}. Using the same notations of the statement, we want to show that for some $T>0$,
$$
\lim_{\e \to 0} \sup_{t \in [0,T]}  W_1(\widetilde{f}_\e(t), g(t)) = 0.
$$
In analogy with the definition of $\widetilde f_{\e}$ we define
$$g_\e(t,x,v) = \int_\M \rho_\e^\theta(t,x) \delta_{v= v^\theta_\e(t,x)} \, d\mu(\theta)$$
and
$$\widetilde g_\e(t,x,v) = \int_\M \rho^\theta_\e(t,x) \delta_{v= v^\theta_\e(t,x)+C_\e(t,x)} \, d\mu(\theta)$$
where $C_\e(t,x):= -\frac{1}{i}(d_+(t,x)e^{\frac{it}{\sqrt \e}}-d_-(t,x)e^{-\frac{it}{\sqrt \e}}).$

We now prove the following estimate:
\be
\label{triangle}
W_1(\widetilde f_{\e}, g)\le W_1(\widetilde f_{\e}, \widetilde g_\e)+W_1(\widetilde g_\e,g).
\ee
We first consider the second term in the right hand side. 
The uniform convergence to $0$ follows from Proposition~\ref{grenier}, and the Sobolev embedding theorem. We have indeed for some $T>0$, for all $t\in [0,T]$:
\begin{align*}
&W_1(\widetilde g_\e (t), g(t)) = \sup_{\|\varphi\|_{\text{Lip}} \leq 1} \langle \widetilde g_\e -g, \, \varphi \rangle \\
&=  \sup_{\|\varphi\|_{\text{Lip}} \leq 1} \left\{ \int_{\bt^d }  \int_\M (\rho_\e^\theta(t,x)\varphi(x,v_\e^\theta(t,x)+ C_\e(t,x)) - \rho^\theta(t,x)\varphi(x,v^\theta(t,x))) \, d\mu(\theta)  \,  dx \right\} \\
&=\sup_{\|\varphi\|_{\text{Lip}} \leq 1} \left\{ \int_{\bt^d }  \int_\M \rho_\e^\theta(t,x)(\varphi(x,v_\e^\theta(t,x) + C_\e(t,x))-\varphi(x,v^\theta(t,x))  \, d\mu(\theta)  \,  dx \right\}  \\
&+ \sup_{\|\varphi\|_{\text{Lip}} \leq 1} \left\{ \int_{\bt^d}  \int_\M (\rho_\e^\theta(t,x) - \rho^\theta(t,x))\varphi(x,v^\theta(t,x)) \, d\mu(\theta)  \,  dx \right\}.
\end{align*}
Thus, we deduce the estimate
\begin{align*}
W_1(\widetilde g_\e (t), g(t)) 
&\leq \sup_{\|\varphi\|_{\text{Lip}} \leq 1}  \sup_{\e \in (0,1), \, \theta \in \M} \|\rho_\e^\theta\|_{\infty} \|\varphi\|_{\text{Lip}} \int_\M \| v_\e^\theta(t,x)+ C_\e(t,x)-v^\theta(t,x)\|_\infty  \, d\mu(\theta) \\
&+ \sup_{\|\varphi\|_{\text{Lip}} \leq 1}  \int_\M \|\rho_\e^\theta- \rho_\theta\|_{\infty} \, d\mu(\theta) \|\varphi\|_{\text{Lip}}  \left(1/2+ \sup_{\theta \in \M} \|v^\theta(t,x)\|_\infty\right) \\
&+  \sup_{\|\varphi\|_{\text{Lip}} \leq 1} \left\{ \int_{\bt^d \times \br}  \int_\M (\rho_\e^\theta(t,x) - \rho^\theta(t,x))\varphi(0,0) \, d\mu(\theta)  \,  dx \right\}.
\end{align*}
We notice that the last term is equal to $0$ since for all $t \geq 0$,
$$
\int_{\bt^d} \int_\M \rho_\e^\theta(t,x) \, d\mu(\theta) \, dx =\int_{\bt^d} \int_\M \rho_\e^\theta(t,x) \, d\mu(\theta) \, dx =1,
$$
by conservation of the total mass. Considering the supremum in time, we  see that the other two terms converge to $0$, using  \eqref{eq:conv}, so that we get
$$ 
\lim_{\e \to 0} \sup_{t \in [0,T]} W_1(\widetilde g_\e,g) =0.
$$
We thus focus on the first term of the right hand side of \eqref{triangle}. First we use Lemma \ref{lem:translation} to see that
$$
W_1(\widetilde f_{\e}, \widetilde g_\e) \leq \(1+ \|D_{x}C_\e(t,x) \|_{L^\infty} \) W_1(f_{\e}, g_{\e}).
$$
Observe from the definition of the corrector $C_\e$, there is $C_T>0$ independent from $\e$ such that for all $t \in [0,T]$,
$$
 \|D_{x}C_\e(t,\cdot) \|_{L^\infty} \leq C_T.
$$
We therefore have to study $W_1(f_{\e}, g_{\e})$. We first use the rough bound
$$
W_1(f_{\e}, g_{\e}) \leq W_2(f_{\e}, g_{\e}),
$$
then use Theorem \ref{thm:Loeper} to get the estimate
$$
\sup_{t \in [0,T]} W_2(f_{\e}, g_{\e}) \le 16\,d\, \exp \left\{\log\left(\frac{\varphi(\e)}{16\,d}\right)\exp\left[C_0T \frac{1}{\e^2} \Big(1 + \| \rho_{f_\e} \|_{L^\infty([0,T]; L^\infty_x)} + \| \rho_{g_\e} \|_{L^\infty([0,T]; L^\infty_x)} \Big) \right]\right\},
$$
where 
$$
\rho_{f_\e} := \int_{\br^d} f_\e \, dv, \quad \rho_{g_\e} = \int_{\M} \rho^\theta_\e \, d\mu(\theta).
$$
Recalling \eqref{eq:conv}, we have for some $C>0$ independent of $\e$ that
$$ \| \rho_{g_\e} \|_{L^\infty([0,T]; L^\infty_x)} \leq C.$$
For what concerns $\rho_{f_\e}$ we apply 
\begin{itemize}
\item in two dimensions, \eqref{rho2D} in Proposition \ref{growth-2D} to infer that for all $\beta>2$, there is some $C_\beta>0$ independent of $\e$ such that
$$
\| \rho_\e \|_\infty \leq \frac{C_\beta}{\e^{2 \max\{\beta,\gamma\}}};
$$
\item  in three dimensions, \eqref{rho3D} in Proposition \ref{3Dcrucial} to infer that for some $C>0$ independent of $\e$ such that
$$
\| \rho_\e \|_\infty \leq \frac{C}{\e^{\max\{38, 3\gamma\}}}.
$$
\end{itemize}
We deduce that choosing
\begin{itemize}
\item in two dimensions, $\varphi(\e)= \exp\left[ \exp\left( - \frac{K}{\e^{2(1+ \max(\beta,\gamma))}}\right) \right]$,
for some constant $K>0$;

\item in three dimensions, $\varphi(\e)= \exp\left[ \exp\left( - \frac{K}{\e^{2+ \max(38,3\gamma))}}\right) \right]$,
for some constant $K>0$,
\end{itemize}
up to take a smaller time interval of convergence $[0,T]$,
$$
\lim_{\e \to 0} \sup_{t \in [0,T]} W_2( f_\e,g_\e) =0.
$$
We conclude that 
$$
\lim_{\e \to 0} \sup_{t \in [0,T]} W_1( \tilde{f}_\e,g) =0
$$
and the proof of Theorem~\ref{thm1} is complete.

\bibliographystyle{plain}
\bibliography{strong-strong-MultiD}

\end{document}